\documentclass[a4paper]{amsart}
\usepackage{amsmath,amssymb}
\usepackage{graphicx,subfigure}
\usepackage{mathtools}
\usepackage[hidelinks]{hyperref}

\usepackage{color}

\newtheorem{theorem}{Theorem}[section]
\newtheorem{proposition}[theorem]{Proposition}

\newtheorem{lemma}[theorem]{Lemma}

\theoremstyle{definition}

\newtheorem*{definition*}{Definition}

\theoremstyle{remark}
\newtheorem{remark}[theorem]{Remark}

\numberwithin{equation}{section}

\newcommand{\xd}{\mathsf{x}}

\newcommand{\bnu}{{\boldsymbol{\nu}}}

\newcommand{\nn}{\mbox{\boldmath$n$}}
\newcommand{\uu}{\mbox{\boldmath$u$}}
\newcommand{\vv}{\mbox{\boldmath$v$}}

\def\RR{\mathbb{R}}

\newcommand{\cB}{{\mathcal B}}

\newcommand{\QQ}{{\mathcal Q}}

\newcommand{\fp}{\mathfrak{p}}
\newcommand{\fq}{\mathfrak{q}}
\newcommand{\fr}{\mathfrak{r}}
\newcommand{\ft}{\mathfrak{t}}

\newcommand\minus\backslash

\newcommand{\abs}[1]{\left|#1\right|}

\newcommand\lan\langle
\newcommand\ran\rangle

\DeclareMathOperator\Div{div} \DeclareMathOperator\Ric{Ric}

\DeclareMathOperator{\di}{div}
\DeclareMathOperator{\gr}{grad}
\DeclareMathOperator{\cu}{curl}

\renewcommand\leq\leqslant
\renewcommand\geq\geqslant
\newlength{\intwidth}

 \DeclareMathOperator\curl{curl}

\begin{document}

\title[A symmetry theorem for Euler flows]{A symmetry theorem for localizable steady solutions of the 3D Euler equations}

\author{Daniel Peralta-Salas}
\address{Instituto de Ciencias Matem\'aticas, Consejo Superior de
  Investigaciones Cient\'\i ficas, 28049 Madrid, Spain}
\email{dperalta@icmat.es}

\author{Radu Slobodeanu}
\address{Faculty of Physics, University of Bucharest, P.O. Box Mg-11, RO--077125 Bucharest-M\u agurele, Romania}
\email{radualexandru.slobodeanu@g.unibuc.ro}

\keywords{Steady Euler flows, localizable solutions, toroidal domain, rotational symmetry}

\begin{abstract}
A steady Euler flow is localizable if the pressure function is constant along its stream lines. This property was used by Gavrilov to construct the first smooth compactly supported steady states of 3D Euler. We prove that any analytic localizable 3D Euler flow in a bounded domain $\Omega$ is axisymmetric and $\Omega$ is a rotationally symmetric domain whose transverse section is a disk or an annulus with convex boundary curves. To the best of our knowledge, this is the first symmetry theorem for 3D steady Euler flows. In the context of MHD equilibria, this result shows that Grad's conjecture holds true for magnetic fields satisfying the isodynamic condition, a property introduced by Palumbo in the 1960's to minimize the effect of particle drifts in plasma confinement devices.
\end{abstract}
\maketitle

\section{Introduction}\label{sec:intro}

The stationary Euler equations in a bounded domain $\Omega\subset\RR^3$ describe a fluid flow in equilibrium. They are formulated as
\begin{equation}\label{eq.eu}
(\uu\cdot \nabla) \uu + \nabla \Pi=0\,,\qquad \Div \uu =0\,, \text{ in } \Omega
\end{equation}
with the boundary condition $\uu\cdot \bnu=0$ on $\partial\Omega$, where $\bnu$ is the outer unit normal vector. Here $\uu$ is the velocity field of the fluid and $\Pi$ is the hydrodynamic pressure. An equivalent formulation in terms of the Bernoulli function $b:=\Pi+\frac12 |\uu|^2$,
\begin{equation} \label{eq.eu.curl}
\uu\times \curl \uu =\nabla b\,,\qquad \Div \uu =0\,,
\end{equation}
is particularly useful when studying the geometry of the solutions, the main landmark being the celebrated Arnold's structure theorem~\cite[Chapter II.1]{arn}. This formulation is also remarkable because it is analogous to the equations satisfied by an equilibrium magnetic field $B$ in the context of magnetohydrodynamics (MHD):
\begin{equation*}
B\times \curl B +\nabla P=0\,,\qquad \Div B =0\,,
\end{equation*}
where $P$ is the plasma pressure (it formally coincides with minus the Bernoulli function if $B$ plays the role of the velocity field of a fluid flow).

In 1968, when studying plasma confinement, Palumbo introduced~\cite{Pal68} a condition that he called ``isodynamic property'' of an MHD equilibrium:
\begin{equation}\label{eq:iso}
B\cdot \nabla |B|^2=0 \text{ in } \Omega\,,
\end{equation}
that is, the modulus of $B$ is constant along the magnetic lines of $B$. Palumbo observed that the condition for all first-order particle drifts to remain in a constant pressure surface is precisely the isodynamic property~\eqref{eq:iso}. This makes condition~\eqref{eq:iso} particularly relevant in order to study plasma confinement because it helps to minimize the effect of particle drifts, and therefore to diminish the harmful plasma-wall interaction in fusion reactors. However, in practice, the isodynamic condition is too strong for real-world magnetic confinement in a fusion device like a stellarator. Surprisingly enough, the same condition was rediscovered by Gavrilov in the context of the Euler equations, when he constructed smooth compactly supported steady Euler flows~\cite{Gavrilov} (see also~\cite{CV,DEP}); in fluid mechanics this is known as ``localizable'' steady states:
\begin{equation}\label{eq:loc2}
\uu \cdot \nabla \Pi  = 0 \text{ in } \Omega\,,
\end{equation}
which is equivalent to
\begin{equation}\label{eq:loc}
\uu \cdot \nabla |\uu|^2  = 0 \text{ in } \Omega\,,
\end{equation}
using the fact that the Bernoulli function $b$ is a first integral of $\uu$. It becomes evident that Equations~\eqref{eq:iso} and~\eqref{eq:loc} are the same (changing $B$ by $\uu$). It is important to emphasize that Gavrilov's construction provides a localizable axisymmetric solution, which is related to Palumbo's study~\cite{Pal68} in the context of axisymmetric MHD equilibria satisfying the isodynamic condition. However, Palumbo did not provide a complete proof of the existence of an isodynamic equilibrium, and was not interested in finding compactly supported solutions; for that one needs an additional clever trick employed by Gavrilov in his article.

Our goal in this work is to prove that any analytic localizable steady Euler flow is axisymmetric and $\Omega$ is a rotationally symmetric domain with convex section. This implies, in particular, that any compactly supported steady state that one can aim to construct using Gavrilov's trick (i.e., the localizability condition) will necessarily be axisymmetric. Palumbo and Balzano stated a weaker version of this result in the context of MHD equilibria~\cite{PB86}, under the additional assumption that $B$ is ergodic on almost all the level sets of $P$ (without studying the convexity of the poloidal section of the domain). Apparently, this work is not very well known in the plasma physics community~\cite{S95,Sch03,Al11}, and moreover, Palumbo-Balzano's proof is rather obscure and it contains several gaps and imprecisions (see Appendix~\ref{S:Balz} for a detailed discussion). Our proof has remarkable differences with respect to~\cite{PB86} and fixes all its issues.

The following is our main result. We state it in the context of steady Euler flows, but the same statement holds mutatis mutandis in the context of MHD equilibria. Notice that the localizability (or isodynamic) condition is a sort of overdetermination for the solutions of the stationary Euler equations~\eqref{eq.eu}, so it stands to reason that such solutions can only exist if they satisfy some symmetry property. To the best of our knowledge, this is the first symmetry theorem for 3D steady Euler flows; symmetry results for the 2D stationary Euler equations have attracted considerable attention in recent years, giving rise to outstanding results, see e.g.~\cite{HN19,CDG21,GPS21,HN23,Ruiz23,EFRS,EHSX}. We recall that a domain $\Omega\subset \mathbb R^3$ is rotationally symmetric if there are coordinates $(x_1,x_2,x_3)$ such that the domain remains unchanged under arbitrary rotations about the $x_3$-axis. A vector field $\uu$ in $\Omega$ is axisymmetric if it commutes with the rotation symmetry $-x_2\partial_{x_1}+x_1\partial_{x_2}$.

\begin{theorem}\label{T.main1}
Let $\uu$ be a steady Euler flow in a smooth bounded domain $\Omega$. Assume that $\uu\in C^1(\overline\Omega)\cap C^\omega(\Omega)$ is tangent to $\partial\Omega$ and satisfies the localizability condition~\eqref{eq:loc2} in $\Omega$, and that $b$ is constant on each connected component of $\partial\Omega$ and $\nabla b(x)\neq 0$ for any $x\in\partial\Omega$. Then, $\uu$ is axisymmetric and:
\begin{itemize}
\item If $\partial \Omega$ is connected, $\Omega$ is a rotationally symmetric toroidal domain with convex poloidal section.
\item Otherwise, $\Omega$ is a rotationally symmetric convex toroidal annulus (in the sense that its poloidal section is an annulus whose two boundary curves are convex).
\end{itemize}
 \end{theorem}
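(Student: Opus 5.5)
Localizability gives $\uu\cdot\nabla\Pi=0$ and $\uu\cdot\nabla b=0$. Hence $\uu\cdot\nabla|\uu|^2=0$, so $|\uu|^2$, $\Pi$ and $b$ are all first integrals of $\uu$.

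\emph{Step 1: $|\uu|^2$ is a function of $b$.} Since $\nabla b\neq0$ on $\partial\Omega$ and $b$ is constant on each boundary component, analyticity and Arnold's structure theorem show that near each boundary component the level sets of $b$ are invariant tori on which $\uu$ is either periodic or ergodic. First I would rule out $\curl\uu\parallel\uu$ on an open set. If $\curl\uu=f\uu$ there, then $\nabla b=0$ there, and by analyticity $b$ would be constant everywhere, contradicting $\nabla b\neq0$ on $\partial\Omega$. So $\uu$ and $\curl\uu$ are independent on an open dense set. Both are tangent to the level sets $\{b=c\}$, and $\curl\uu$ is also a symmetry of the flow in the Arnold sense. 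The field $\curl\uu$ preserves $|\uu|^2$ by the following computation: $\curl\uu\cdot\nabla\Pi=\curl\uu\cdot\nabla b-\tfrac12\curl\uu\cdot\nabla|\uu|^2$, and $\curl\uu\cdot\nabla b=0$. Combined with $\uu\cdot\nabla\Pi=0$, this makes $\Pi$ and $|\uu|^2$ functionally dependent on $b$ on each torus, whenever $\curl\uu\cdot\nabla\Pi=0$ holds. To get $\curl\uu\cdot\nabla\Pi=0$, I would use the Euler identity $(\uu\cdot\nabla)\uu=-\nabla\Pi$ and the fact that $|\uu|$ is constant along streamlines, so the streamlines have acceleration normal to $\uu$. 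I expect this to yield $\Pi=\Pi(b)$, hence $|\uu|^2=2(b-\Pi(b))=:g(b)$, locally near $\partial\Omega$ and then globally by analyticity.

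\emph{Step 2: a Killing field.} Write $\uu$ on each torus $\{b=c\}$ with $|\uu|$ constant and $\nabla b$ normal. I would then compute $\Div\uu=0$, $\uu\times\curl\uu=\nabla b$ and $(\uu\cdot\nabla)\uu=-\Pi'(b)\nabla b$. These say that each streamline is a curve of constant speed whose acceleration is normal to the level surface, i.e.\ a geodesic-type (asymptotic) condition. In particular the level surfaces carry a flow of constant speed with curvature vector along the normal. The second fundamental form $\mathrm{II}$ of $\{b=c\}$ satisfies $\mathrm{II}(\uu,\uu)=-\Pi'(b)|\nabla b|/|\uu|^2$ times a constant, and the flow $\uu/|\uu|$ consists of geodesics of the surface. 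Using $\Div\uu=0$ and $|\uu|=$ const on the surface, $\uu$ is divergence-free on the surface with respect to the area form weighted by $|\nabla b|$. Together with the geodesic property, this should force $\uu$ to be a Killing field of the induced metric, or at least force $|\nabla b|$ to be constant along $\uu$ and along $\uu^\perp$ in a way that gives a one-parameter isometry group. The main obstacle is extracting this rigidity: showing that a divergence-free unit geodesic field on a torus, whose normal curvature is constant, generates a rigid motion of $\RR^3$. I would attack it through the Codazzi equations along $\uu$ and $\uu^\perp$, aiming to show that the torus has a principal-curvature structure forcing it to be a surface of revolution. A torus in $\RR^3$ whose intrinsic Killing field extends is then a surface of revolution, and analyticity propagates the extension through the nested family.

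\emph{Step 3: axisymmetry and geometry of $\Omega$.} Once a Euclidean Killing field $\xi=-x_2\partial_{x_1}+x_1\partial_{x_2}$ preserves all the level sets of $b$ (and not a translation, since $\Omega$ is bounded), I would show $[\xi,\uu]=0$ from the uniqueness of the ODE data along level sets. Then $\Omega$ is rotationally symmetric, and each boundary component is a torus of revolution, so $\Omega$ is a solid torus or a toroidal annulus. For convexity of the poloidal section, I would use the geodesic property: the unit field $\uu/|\uu|$ is a nonvanishing geodesic field on $\partial\Omega$ invariant under rotation, and by Clairaut's relation such geodesics exist everywhere only if the normal curvature $\mathrm{II}(\uu,\uu)$ has fixed sign. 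Since $\Pi'(b)\nabla b$ has fixed direction at a regular boundary level, this fixed sign should translate into the meridian curve having curvature of one sign, i.e.\ convexity of the boundary curves.
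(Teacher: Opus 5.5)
Your proposal has genuine gaps. The decisive one is Step~2, which carries essentially all of the content of the theorem: you name the needed rigidity as ``the main obstacle'' but give no mechanism for it, and your intermediate targets are not correct.

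\begin{itemize}
\item The normal curvature of a level torus in the direction of $\uu$ is not constant. From $\nabla_{\uu}\uu=-\Pi'(b)\nabla b$ it equals $\mp\Pi'(b)|\nabla b|/|\uu|^2$ (in the paper's notation $-\tfrac{\bar A'}{2}\sqrt{X/A}$), and $|\nabla b|$ varies over each torus. Only its sign is fixed.
\item $\uu$ cannot be a Killing field of the induced metric. In the coordinates of Step~1 the leaf metric is $A\,du^2+X\,dv^2$, and $\partial_u$ is Killing for it only if $p=\partial_u X\equiv 0$ on the leaf. By \eqref{gauss_int} that would make the torus flat, which is impossible in $\RR^3$. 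The actual symmetry is $\xi=\gamma_1(w)\bigl(\partial_u-\gamma(w)\partial_v\bigr)$, a $w$-dependent combination of $\uu$ and $\uu\times\nabla b$.
\item A Killing field of each leaf does not give a Killing field of $\RR^3$. The normalization $\gamma_1(w)$ and the compatibility \eqref{compat}, which yield $[\xi,\nabla w]=0$, come from the flatness equation \eqref{G2}. That equation contains the normal derivative $DX$ and is not one of the Gauss--Codazzi equations of a single leaf.
\end{itemize}

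Even to reach $X=X(\gamma u+v,w)$, the paper needs all of the following, and your plan offers no substitute for any of it:
\begin{itemize}
\item the commuting frame $\uu$, $\uu\times\nabla b$ (from \eqref{ident0.3} once $\uu(g(\uu,\curl\uu))=0$) and unimodular adapted coordinates;
\item all six zero-curvature identities;
\item the auxiliary first integral $F$ with $\uu(F)=\vv(G)$, and the proofs that $G$ and $F$ are functions of $b$ (Lemmas~\ref{GPfuncdep} and~\ref{BFfuncdep});
\item a separate treatment of the degenerate case $q\equiv0$ (Lemma~\ref{L.q0}).
\end{itemize}
Likewise, in Step~3, knowing that the level tori are surfaces of revolution does not give $[\xi,\uu]=0$ through ``uniqueness of ODE data''. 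In the paper this commutation is built into the construction of $\xi$ and then propagated by analyticity.

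Step~1 also misses its key idea. Your displayed computation only says $\curl\uu(\Pi)=-\tfrac12\curl\uu(|\uu|^2)$, and ``acceleration normal to $\uu$'' just restates $\uu(|\uu|^2)=0$; neither gives information about $\curl\uu(\Pi)$. The periodic/ergodic dichotomy only helps on irrational tori, while an open family of resonant tori (locally constant rational rotation number) carries non-constant first integrals. What is needed is identity~\eqref{ident0.2}, $\curl\uu(|\uu|^2)=\uu\bigl(g(\uu,\curl\uu)\bigr)$, obtained by taking the divergence of \eqref{ident0.1}. Since $[\uu,\curl\uu]=0$, the left-hand side is a first integral of $\uu$, so $g(\uu,\curl\uu)$ would grow linearly along orbits unless it vanishes. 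This is exactly the contradiction exploited in Lemma~\ref{BPfuncdep}.

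Finally, your convexity argument fails as stated. The fixed sign of $\mathrm{II}(\uu,\uu)$ comes from $\Pi'(b)\neq0$ (the paper's $\bar A'\neq0$), not from Clairaut, and by itself it does not force the meridian to have curvature of one sign. Write $\mathrm{II}(\uu,\uu)=|\uu|^2(\kappa_m\cos^2\phi+\kappa_p\sin^2\phi)$. On the outer part of the torus the parallel curvature $\kappa_p$ already has the required sign, so a slightly concave stretch of the meridian is compatible with the sign condition as soon as the swirl is nonzero. The paper instead combines the exact equations \eqref{GSeq} and \eqref{ISODeq} and integrates them to the ODE \eqref{dydxpal} for each level curve. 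It then deduces positive curvature from the monotonicity of $2\QQ(\xd)-(\xd+\lambda_1)\QQ'(\xd)$.
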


\begin{remark}[Grad conjecture]
A particular case of Theorem~\ref{T.main1} is when $\Omega$ is foliated by regular level sets of $b$, except for a core closed line, which implies that $\Omega$ is diffeomorphic to a solid torus. This hypothesis was introduced by Grad when studying plasma confinement properties. He conjectured that equilibria of this type must be axisymmetric or isolated~\cite{Grad}. Our theorem shows that Grad's conjecture holds true in the case of isodynamic (or localizable) analytic equilibria, which is a natural condition~\cite{Pal68} in order to improve plasma confinement.
\end{remark}

\begin{remark} In plasma physics, the notion of quasi-symmetry plays a central role~\cite{BKM20}. Recall that a vector field $\xi$ is a \textit{weak quasi-symmetry}~\cite{rod} of the  magnetic field $B$ if
$$[\xi, B]=0\,, \qquad \di \xi =0\,, \qquad \xi(\abs{B}^2)=0\,.$$
We observe that any isodynamic magnetic field $B$ is weakly quasi-symmetric because any vector field of the form $\xi = f_1(P)  B + f_2(P) \cu B$ (for arbitrary functions $f_1,f_2$) is a weak quasi-symmetry of $B$. Our result shows that an isodynamic equilibrium is, in fact, Killing symmetric.
\end{remark}

This article is organized as follows. In Section~\ref{sec:pre} we recall several constructions and identities from Riemannian geometry, and we introduce the Euler equations on Riemannian manifolds. The proof of Theorem~\ref{T.main1}, up to the convex section claim, is presented in Section~\ref{sec:proof}. It is divided in $6$ steps (Sections~\ref{SS.step1}--\ref{SS.step6}), and we use suitable local coordinates and geometric identities, this being the reason why it is crucial to write the Euler equations for arbitrary Riemannian metrics. Finally, the fact that the domain $\Omega$ has a convex poloidal section is proved in Section~\ref{S:polo}. The paper concludes with an Appendix, where we briefly discuss some of the issues in Palumbo-Balzano's proof~\cite{PB86}.

\section{Preliminary results and useful identities}\label{sec:pre}
The Euler equations can be introduced on a Riemannian 3-manifold $(M, g)$, possibly with boundary. This will be helpful for us in the next section where we have to work with non-standard coordinates in which the metric is described only by some general properties.

\subsection{Vector calculus identities on a Riemannian manifold}
Let $(M,g)$ be a Riemannian 3-manifold, where we denote the Levi-Civita connection by $\nabla$ and the volume element by $\mathrm{vol}_g$. As usual, $\flat$ stands for the isomorphism that transforms vector fields into $1$-forms, i.e., $X^\flat(Y):=g(X,Y)$ for any vector fields $X,Y$ on $M$, and $\sharp$ denotes its inverse (the isomorphism from $1$-forms into vector fields). The Hodge star operator from $p$-forms to $3-p$ forms is defined by
$$
(\ast \alpha)(X_1, \ldots , X_{3-p})\mathrm{vol}_g =\alpha \wedge X_1^\flat \wedge \ldots \wedge X_{3-p}^\flat\,,
$$
for any $p$-form $\alpha$ and any vector fields $X_i$ on $M$. The codifferential operator
$\delta=(-1)^p\ast\mathrm{d}\ast$, which takes $p$-forms and gives $(p-1)$-forms, is the formal adjoint to the exterior derivative $\mathrm{d}$, which acts on 1-forms $\alpha$ as follows
$$\mathrm{d} \alpha (X,Y)=X(\alpha(Y))-Y(\alpha(X))- \alpha([X,Y])\,,$$
for any vector fields $X,Y$. Here $[\cdot,\cdot]$ denotes the commutator (or Lie bracket) of two vector fields, which satisfies the (zero torsion) identity
\[
[X,Y]=\nabla_X Y - \nabla_Y X\,.
\]
Given any (local) vector field $X$ on $M$, we define the divergence
$$\Div X=-\delta X^\flat$$
and the curl operator
$$\curl X=\left(\ast\mathrm{d} X^\flat\right)^\sharp\,,$$
and for any function $f$, its gradient is $\nabla f = (\mathrm{d}f)^\sharp$. Given two vector fields, the cross product is defined as
$$X \times Y =\left(\ast(X^\flat \wedge Y^\flat)\right)^\sharp\,,$$
and the triple product has the property:
\begin{equation}\label{triple}
X \times (Y \times Z)= g(X, Z) Y- g(X, Y)Z\,,
\end{equation}
where $g(X,Y)$ denotes the scalar product of two vector fields $X$ and $Y$. For simplicity, we shall denote the pointwise norm of a vector field $X$ by
$$|X|^2=g(X,X)\,.$$

The expressions in coordinates of all these operators, as well as the vector calculus identities analogues in this general context, are standard; for a concise account on the subject, see e.g.~\cite{Lee}. In particular, we shall make extensive use of the following identities:
\begin{equation}\label{ident.cross}
\begin{split}
&\di (X \times Y)=g(\cu X, Y) - g(X, \cu Y),\\
&\curl (X \times Y) = (\di Y)X - (\di X)Y - [X,Y]\,.
\end{split}
\end{equation}
Finally, let us recall that $X_x \in T_x M$ represents the vector field $X$ evaluated at the point $x\in M$ and that $X(f)$ denotes the action of $X$ (understood as a first order differential operator) on the function $f$:
\[
X(f)=g(X,\nabla f)\,
\]

\subsection{The Euler equations on a Riemannian manifold}
The stationary Euler equations~\eqref{eq.eu} on $(M,g)$ read as
\begin{equation*}
\nabla_{\uu} \uu + \nabla \Pi=0\,,\qquad \Div \uu =0\,,
\end{equation*}
where  $\uu$ is a vector field on $M$ (tangent to $\partial M$) and $\Pi$ is a function on $M$. A solution of these equations is usually called a \textit{steady Euler flow}. As in the Euclidean case, we have the equivalent formulation
$$\uu \times \curl \uu =\nabla b\,, \qquad \Div \uu=0\,,$$
with $b:=\Pi+\frac12 |\uu|^2$, which implies, by \eqref{ident.cross}, that $\uu$ and $\cu\uu$ commute, i.e.,
\begin{equation}\label{commute}
[\uu, \cu \uu]=0.
\end{equation}
It is easy to check~\cite[Chapter II.1]{arn} that the Bernoulli function $b$ is a first integral of $\uu$ and $\cu \uu$ and that $\uu_x$ and $(\cu \uu)_x$ are linearly independent (nonvanishing) vectors
at each point $x \in M$ such that $(\nabla b)_x \neq 0$.

The following identities will be instrumental in the proof of Theorem~\ref{T.main1}.
\begin{lemma}
For any steady Euler flow $\uu$ on a Riemannian $3$-manifold $(M, g)$ we have the following identities
\begin{eqnarray}
&\abs{\uu}^2 \cu \uu  = g(\uu, \cu \uu)\uu - \uu \times \nabla b\,,  \label{ident0.1}\\
& \cu \uu (\abs{\uu}^2)  = \uu(g(\uu, \cu \uu))\,, \label{ident0.2}\\
& \cu \uu (g(\uu, \cu \uu))  = \uu(\abs{\cu \uu}^2)-g(\cu(\cu \uu), \nabla b)\,, \label{ident0.2bis}\\
& \uu(g(\uu, \cu \uu))\uu - \uu(\abs{\uu}^2) \cu \uu  = [\uu, \ \uu \times \nabla b]\,.  \label{ident0.3}
\end{eqnarray}
\end{lemma}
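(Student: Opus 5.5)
Write $\omega:=\cu\uu$. The plan is to obtain each identity from the Euler equation $\uu\times\omega=\nabla b$, the triple product rule~\eqref{triple}, the identities~\eqref{ident.cross}, the commutation $[\uu,\omega]=0$ from~\eqref{commute}, and the facts $\uu(b)=\omega(b)=0$ and $\Div\omega=0$. The last fact holds because $\Div\cu=-\delta\ast\mathrm d=\pm\ast\mathrm d\mathrm d=0$.

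\emph{Identity~\eqref{ident0.1}.} Apply~\eqref{triple} with $X=Y=\uu$, $Z=\omega$:
\[
\uu\times(\uu\times\omega)=g(\uu,\omega)\uu-\abs{\uu}^2\omega .
\]
Since $\uu\times\omega=\nabla b$, the left-hand side is $\uu\times\nabla b$. Rearranging gives~\eqref{ident0.1}.

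\emph{Identity~\eqref{ident0.2}.} Let $\omega$ act on $\abs{\uu}^2$:
\[
\omega(\abs{\uu}^2)=2g(\nabla_\omega\uu,\uu)=2g(\nabla_\uu\omega,\uu)+2g([\omega,\uu],\uu)=2g(\nabla_\uu\omega,\uu),
\]
using torsion-freeness and $[\uu,\omega]=0$. On the other hand,
\[
\uu(g(\uu,\omega))=g(\nabla_\uu\uu,\omega)+g(\uu,\nabla_\uu\omega).
\]
Here $\nabla_\uu\uu=-\nabla\Pi=-\nabla b+\tfrac12\nabla\abs{\uu}^2$ and $\omega(b)=0$, so $g(\nabla_\uu\uu,\omega)=\tfrac12\omega(\abs{\uu}^2)$. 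Combining, $\uu(g(\uu,\omega))=\tfrac12\omega(\abs{\uu}^2)+\tfrac12\omega(\abs{\uu}^2)=\omega(\abs{\uu}^2)$, which is~\eqref{ident0.2}.

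\emph{Identity~\eqref{ident0.2bis}.} Use the same symmetric trick with the roles of $\uu$ and $\omega$ exchanged. First,
\[
\omega(g(\uu,\omega))=g(\nabla_\omega\uu,\omega)+g(\uu,\nabla_\omega\omega)
\quad\text{and}\quad
g(\nabla_\omega\uu,\omega)=g(\nabla_\uu\omega,\omega)=\tfrac12\uu(\abs{\omega}^2).
\]
For the remaining term, the general identity $\nabla_\omega\omega=\tfrac12\nabla\abs{\omega}^2-\omega\times\cu\omega$ gives
\[
g(\uu,\nabla_\omega\omega)=\tfrac12\uu(\abs{\omega}^2)-g(\uu,\omega\times\cu\omega)=\tfrac12\uu(\abs{\omega}^2)-g(\uu\times\omega,\cu\omega),
\]
using the cyclic symmetry of the triple product. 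Since $\uu\times\omega=\nabla b$, adding the two contributions yields~\eqref{ident0.2bis}.

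\emph{Identity~\eqref{ident0.3}.} Take the Lie bracket of~\eqref{ident0.1} with $\uu$. Because $[\uu,\omega]=0$ and $[\uu,\uu]=0$,
\[
[\uu,\abs{\uu}^2\omega]=\uu(\abs{\uu}^2)\omega
\quad\text{and}\quad
[\uu,g(\uu,\omega)\uu]=\uu(g(\uu,\omega))\uu .
\]
Substituting these gives exactly $\uu(g(\uu,\omega))\uu-\uu(\abs{\uu}^2)\omega=[\uu,\uu\times\nabla b]$.

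\emph{Main obstacle.} The only delicate point is sign bookkeeping on a general Riemannian manifold. In~\eqref{ident0.2bis} one needs the identities $\nabla_\omega\omega=\tfrac12\nabla\abs{\omega}^2-\omega\times\cu\omega$ and $\nabla_\uu\uu=\nabla b-\uu\times\omega$, with the paper's conventions for $\times$ and $\cu$. Both are checked pointwise in orthonormal coordinates, where they reduce to the Euclidean formulas.
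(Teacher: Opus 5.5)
Your proof is correct. For \eqref{ident0.1} and \eqref{ident0.3} you argue exactly as the paper does: cross the Euler equation with $\uu$ and apply \eqref{triple}, then take the bracket with $\uu$ and use \eqref{commute}.

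For \eqref{ident0.2} and \eqref{ident0.2bis} your route is genuinely different. The paper takes the divergence of \eqref{ident0.1}. It also takes the divergence of its analogue $\abs{\omega}^2\uu-g(\uu,\omega)\omega=\omega\times\nabla b$, obtained by crossing the Euler equation with $\omega=\cu\uu$. That computation uses $\Div(fX)=X(f)+f\Div X$, the formula for $\Div(X\times Y)$ in \eqref{ident.cross}, $\Div\uu=\Div\omega=0$, $\omega(b)=0$ and $\cu\nabla b=0$.

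You instead work with the Levi-Civita connection. Your ingredients are:
\begin{itemize}
\item metric compatibility;
\item torsion-freeness combined with $[\uu,\omega]=0$, so that $\nabla_\uu\omega=\nabla_\omega\uu$;
\item the Euler equation in the form $\nabla_\uu\uu=-\nabla\Pi$;
\item the Lamb-type identity $\nabla_\omega\omega=\tfrac12\nabla\abs{\omega}^2-\omega\times\cu\omega$;
\item cyclicity of $g(X\times Y,Z)=\mathrm{vol}_g(X,Y,Z)$.
\end{itemize}
I checked the Lamb identity with the paper's conventions for $\times$ and $\cu$; it does hold.

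The paper's route applies one template twice and uses only operator identities already listed in the preliminaries, so no new convention checking is needed. Yours is a direct pointwise computation. In it, divergence-freeness enters only through the commutation \eqref{commute}; you never actually use $\Div\omega=0$ directly, even though you list it. The price is the extra $\nabla_X X$ identity that must be verified.

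One slip in your closing remark: the identity $\nabla_\uu\uu=\nabla b-\uu\times\omega$ is wrong. Together with $\uu\times\omega=\nabla b$ it would force $\nabla_\uu\uu=0$. The correct general formula is $\nabla_\uu\uu=\tfrac12\nabla\abs{\uu}^2-\uu\times\omega$. This does not affect your argument. In the proof of \eqref{ident0.2} you correctly use $\nabla_\uu\uu=-\nabla b+\tfrac12\nabla\abs{\uu}^2$, and your proof of \eqref{ident0.2bis} does not involve $\nabla_\uu\uu$ at all.
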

\begin{proof}
Take the cross product with $\uu$ in the stationary Euler equation $\uu \times \cu \uu =  \nabla b$ and use the identity~\eqref{triple} to get
\begin{equation*}
 \abs{\uu}^2 \cu \uu-g(\uu, \cu \uu)\uu =-\uu \times \nabla b,
\end{equation*}
which is~\eqref{ident0.1}. Then take the divergence in both members above and use the standard identity $\Div(fX)=X(f)+f\Div X$ and Equation~\eqref{ident.cross} to obtain
$$
(\cu \uu) (\abs{\uu}^2)  - \uu(g(\uu, \cu \uu)) =-g(\cu \uu , \nabla b)+g(\uu, \cu \nabla b),
$$
 which simplifies to Equation~\eqref{ident0.2} because $(\cu \uu) (b)=0$ and  $\cu \circ \nabla=0$. To prove the identity~\eqref{ident0.2bis} we follow the same steps but we start by the cross product with $\cu \uu$ in the stationary Euler equation.

Finally, to prove~\eqref{ident0.3} we take the Lie bracket with $\uu$ of Equation~\eqref{ident0.1} and use the identity~\eqref{commute}.
\end{proof}

\section{Proof of the main theorem}\label{sec:proof}

Let $\uu$ be a steady Euler flow as in the statement of Theorem~\ref{T.main1}. In particular, $\uu$ is an analytic vector field in a bounded domain $\Omega\subset\RR^3$, its Bernoulli function $b$ is non-constant, and $\uu$ is tangent to the boundary and localizable, i.e.,
$$\uu(\abs{\uu}^2)=0\,.$$
We observe that the function $b$ is also analytic, because it satisfies the elliptic PDE
\[
\Delta b = \Div(\uu\times \curl\uu)
\]
in $\Omega$, and the RHS is an analytic function. By assumption, each component of $\partial\Omega$ is a regular and compact level set of $b$. By continuity, $\nabla b\neq 0$ near $\partial\Omega$, so, in many parts of the proof, it will be very convenient to restrict ourselves to a neighborhood
$$\mathcal{W}\subset \Omega$$
of a compact connected component of a regular level set of $b$ (close to $\partial\Omega$) foliated by (component of) regular level sets. By construction, $(\nabla b)_x\neq 0$ and therefore $\uu_x\neq 0$ at each point $x\in\mathcal W$. Arnold's structure theorem~\cite{arn} implies that the (components of the) level sets of $b|_{\mathcal W}$ are diffeomorphic to tori, and the set $\mathcal W$ is diffeomorphic to $\mathbb T^2\times (0,1)$. In particular, since each component of $\partial\Omega$ is a regular level set of $b$, all the level sets near the boundary are diffeomorphic to each other, so we infer that each component of $\partial\Omega$ is diffeomorphic to a torus.

First, let us prove that the first integral $|\uu|^2$ is functionally dependent with $b$, and hence it is a function of $b$ in the regular neighborhood $\mathcal W$.

\begin{lemma}\label{BPfuncdep}
The first integrals of $\uu$, $|\uu|^2$ and $b$, are functionally dependent in $\Omega$. In particular, they are also first integrals of $\cu \uu$ in $\Omega$.
\end{lemma}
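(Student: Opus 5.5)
We want to show $d|\uu|^2\wedge db=0$ in $\Omega$. By analyticity of $|\uu|^2$ and $b$, the $2$-form $d|\uu|^2\wedge db$ is analytic in the connected set $\Omega$. It therefore suffices to prove that it vanishes on a nonempty open set, and we take this set to be the regular neighborhood $\mathcal W$ near a boundary component. Once functional dependence holds in $\Omega$, we get $\nabla|\uu|^2 = \lambda\nabla b$ on the dense open set where $\nabla b\neq0$. Since $(\cu\uu)(b)=0$, this gives $(\cu\uu)(|\uu|^2)=0$ there, and by continuity everywhere. This is the "in particular" claim.

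\textbf{Step 1 (dynamics on the invariant tori).} In $\mathcal W$, Arnold's theorem gives that each level torus $T_c$ of $b$ carries the commuting, pointwise independent vector fields $\uu$ and $\cu\uu$. Hence $\uu|_{T_c}$ is conjugate to a linear flow with some rotation number $\omega(c)$. Since $b$ is analytic and $\mathcal W$ is foliated by regular tori, we can use action-angle type coordinates. In these coordinates, $\omega$ is either constant on an interval or irrational for a dense (full measure) set of values $c$.

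\textbf{Step 2 (irrational tori).} If $\omega(c)$ is irrational, every orbit of $\uu$ is dense in $T_c$. The first integral $|\uu|^2$ is continuous, so it is constant on $T_c$. If the set of such $c$ is dense, then $|\uu|^2$ is constant on a dense set of level tori. By continuity $|\uu|^2$ is constant on every torus in $\mathcal W$, so $|\uu|^2=F(b)$ there and $d|\uu|^2\wedge db=0$ on $\mathcal W$.

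\textbf{Step 3 (resonant case, the main obstacle).} The remaining case is that $\omega$ is a constant rational on an open interval. Then all $\uu$-orbits in an open subset are periodic, and $|\uu|^2$ need not a priori be constant on tori. Here I would use identity \eqref{ident0.2}: $\cu\uu(|\uu|^2)=\uu(g(\uu,\cu\uu))$. Integrate this over a periodic orbit $\gamma$ of $\uu$, parametrized by the $\uu$-flow with period $\tau$. The right-hand side integrates to zero, so $\int_0^\tau \cu\uu(|\uu|^2)\circ\phi_t\,dt=0$.

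Now $|\uu|^2$ is $\uu$-invariant, hence constant along $\gamma$. Since $[\uu,\cu\uu]=0$, the function $\cu\uu(|\uu|^2)$ is also $\uu$-invariant. The integral above therefore equals $\tau\,\cu\uu(|\uu|^2)(\gamma)$, which forces $\cu\uu(|\uu|^2)=0$.

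So $|\uu|^2$ is annihilated by both $\uu$ and $\cu\uu$, which span $TT_c$. Hence $|\uu|^2$ is constant on each torus here too. The same argument in fact works on all of $\mathcal W$, bypassing Step 2, except for the irrational tori, where Step 2 applies.

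The delicate point I expect is justifying the constancy of the period and rotation structure, i.e. the dichotomy in Step 1. A cleaner route avoiding it is to average \eqref{ident0.2} over the torus $T_c$ with the invariant measure of the linear flow, which is preserved by both fields. This gives $\int_{T_c}\cu\uu(|\uu|^2)\,d\mu=0$. Combining this with invariance of $\cu\uu(|\uu|^2)$ under $\uu$ settles the irrational case directly. Having concluded $d|\uu|^2\wedge db=0$ on $\mathcal W$, analytic continuation finishes the proof.
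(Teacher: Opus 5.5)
Your proof is correct. Its core mechanism is the same as the paper's: integrate the identity \eqref{ident0.2}, $\cu\uu(|\uu|^2)=\uu(G)$ with $G=g(\uu,\cu\uu)$, along an orbit of $\uu$, and use that $G$ cannot grow linearly on a closed orbit.

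Where you differ is in how you show that $\uu(G)$ is constant along orbits. The paper argues by contradiction on the open dense set $\mathcal U$ where $|\uu|^2$ and $b$ are assumed independent. It writes $\uu=\mathcal I\,\nabla|\uu|^2\times\nabla b$ with $\mathcal I$ a first integral, and crosses with $\cu\uu$ to get $\mathcal I\,\uu(G)=1$. So $\uu(G)$ is a nonzero constant on each (periodic) orbit in $\mathcal U$. You instead note that $h:=\cu\uu(|\uu|^2)$ satisfies $\uu(h)=\cu\uu\big(\uu(|\uu|^2)\big)+[\uu,\cu\uu](|\uu|^2)=0$, hence $G(\phi_t(x))=G(x)+t\,h(x)$. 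This proves $h=0$ directly, with no rank-two set and no density factor $\mathcal I$. Then $|\uu|^2$ is constant on each torus because $\uu$ and $\cu\uu$ span its tangent plane.

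Your own version makes Steps~1 and~2 unnecessary. $G$ is continuous on the compact invariant torus $T_c\subset\mathcal W$, so it is bounded along every orbit. Hence $G(\phi_t(x))=G(x)+t\,h(x)$ already forces $h(x)=0$ for every $x\in\mathcal W$, whether the orbit is periodic or dense. This is precisely the boundedness argument the paper itself uses for $F$ in Lemma~\ref{GPfuncdep}.

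So the rotation-number dichotomy you flagged as delicate can be dropped, and so can the torus-averaging alternative. As written, the dichotomy is also not quite justified. The claim ``constant on an interval or irrational on a dense set'' needs some regularity of $c\mapsto\omega(c)$ that you only gesture at. For merely continuous $\omega$, the correct statement is that every $c$ lies either in the closure of the irrational set or in an open interval where $\omega$ is a constant rational; that still suffices. The streamlined argument is shorter than the paper's, and it does not even need the periodicity of orbits in $\mathcal U$ that the paper's contradiction argument relies on.
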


\begin{proof}
We argue by contradiction. Assume that the first integrals $b$ and $|\uu|^2$ are independent in the set $\mathcal W$. Since they are analytic functions, the set
$$\mathcal U:= \{x\in \mathcal W: \text{rank}\big((\nabla |\uu|^2)_x, \, (\nabla b)_x\big)=2\}$$
is open and dense in $\mathcal W$. Moreover, being first integrals, it is standard that $\mathcal U$ is an invariant set under the flow of $\uu$.
It follows that there exists a function $\mathcal{I}=\mathcal{I}(b, \abs{\uu}^2)$ such that
$$
\uu= \mathcal{I}(b, \abs{\uu}^2) \   \nabla |\uu|^2 \times \nabla b
$$
on $\mathcal U$. Since $\uu$ is nonvanishing on $\mathcal W$, we have that the function $\mathcal I$ does not vanish at any point in $\mathcal U$.

Taking the vector product of this expression with $\cu \uu$ we get:
$$\uu \times \cu \uu = \mathcal{I}(b, |\uu|^2) \, (\cu \uu)(|\uu|^2) \, \nabla b\,,$$
and since $\nabla b$ is nonvanishing on $\mathcal W$, the stationary Euler equation implies
$$\mathcal{I}(b, |\uu|^2) \, (\cu \uu)(|\uu|^2)=1\,,$$
which translates, using the identity~\eqref{ident0.2}, into
\begin{equation} \label{VS1}
\mathcal{I}(b, |\uu|^2) \, \uu(g(\uu, \cu \uu)) =1\,.
\end{equation}

Next, consider an integral curve $\phi_t(x)$ of the vector field $\uu$ in $\mathcal U$. This curve is periodic (say, of period $T$) by our assumption of two independent first integrals, and it is contained in $\mathcal U$. Setting $G:=g(\uu, \cu \uu)$, Equation~\eqref{VS1} reads along the flow $\phi_t(x)$ as
$$
\tfrac{d}{dt} G(\phi_t(x))=\frac{1}{\mathcal{I}(c_1,c_2)}\neq 0,
$$
where the constants $c_1$ and $c_2$ are the values of $b$ and $|\uu|^2$ along the integral curve $\phi_t(x)$.
By integrating this ODE, and taking into account the periodicity, we get
$$
G(x)= G(\phi_T(x)) = \frac{T}{\mathcal{I}(c_1,c_2)}+G(x),
$$
which is impossible, thus showing that the set $\mathcal U$ is empty, and hence the functions $b$ and $|\uu|^2$ are functionally dependent in $\mathcal W$. By analyticity, they are functionally dependent on the whole domain $\Omega$. Finally, since $|\uu|^2$ is a function of $b$ in $\mathcal W$ (because $\nabla b$ is nonvanishing), we conclude that $\cu \uu(|\uu|^2)=0$ on $\mathcal W$, and hence on the whole $\Omega$, again by analyticity.
\end{proof}
\begin{remark}\label{R:aw}
For later convenience, we notice here that $|\uu|^2$ is a function of $b$ in the set $\mathcal W$, i.e.,
\[
|\uu|^2=A_0(b)
\]
for some analytic function $A_0>0$. This is a straightforward consequence of the functional dependence of $b$ and $|\uu|^2$ and that $\nabla b$ is nonvanishing in $\mathcal W$.
\end{remark}



We divide the proof of Theorem~\ref{T.main1} in several steps. In Step~1 we introduce a convenient local chart of coordinates adapted to the vector field $\uu$ and the Bernoulli function $b$ and several functions and identities that will be key for the rest of the proof. In Step~2 we exploit that the Euclidean metric is flat, and obtain some geometric identities using that the curvature is zero. Step~3 is devoted to construct new first integrals and to prove that, in fact, they are functions of $b$. The previous results allow us to construct a symmetry vector field of $\uu$ in Steps~4 and~5, which is a Killing field of the metric. Finally, the previous local results are globalized in Step~6 using that all the involved functions are analytic. This proves that $\Omega$ is a rotationally symmetric domain and that $\uu$ is axisymmetric, so to complete the proof of Theorem~\ref{T.main1}, it remains to show the claim that the poloidal section of the rotationally symmetric domain $\Omega$ consists in a convex disk or a convex annulus, whose proof is presented in Section~\ref{S:polo}.

\subsection{Step~1: introduction of adapted local coordinates and some useful identities}\label{SS.step1}

The first observation is that if we combine Lemma~\ref{BPfuncdep} with Equation~\eqref{ident0.2}, we obtain that $g(\uu, \cu \uu)$ is also a first integral of $\uu$ on $\Omega$, i.e.,
$$\uu(g(\uu, \cu \uu))=0\,,$$
and, by Equation~\eqref{ident0.3}, we have the commutation relation
\begin{equation}\label{newcomm}
[\uu, \ \uu \times \nabla b]=0\,,
\end{equation}
in $\Omega$. Since $\uu$ and $\uu\times \nabla b$ commute, and are linearly independent in $\mathcal W$, we infer from~\cite[Theorem 9.46]{Lee} that around any point $x \in \mathcal{W}$ there are analytic (local) coordinates $(u, v, w)$ such that
\begin{equation}\label{coord_vect}
\uu=\partial_u, \qquad \uu \times \nabla b= \partial_v\,.
\end{equation}
In fact, the third coordinate $w$ is defined on the whole set $\mathcal W$ and indexes the nested toroidal level sets of $b|_{\mathcal W}$. Indeed, $\uu(b)=0$ and $(\uu \times \nabla b)(b)=0$ by Equation~\eqref{ident0.1}, so
\[
b=b(w)
\]
in $\mathcal W$. Analogously, since $\uu(|\uu|^2)=0$ by assumption, and $(\uu \times \nabla b)(|\uu|^2)=0$ by Lemma~\ref{BPfuncdep} and~\eqref{ident0.1}, we also have
$$|\uu|^2=A(w)$$
for some analytic function $A>0$ in $\mathcal W$. Comparing with Remark~\ref{R:aw}, we observe that
\[
A(w) = A_0(b(w))\,.
\]
Actually, the coordinate $w$ is not fixed and can be reparametrized as desired. The following choice of $w$ as a function of $b$ (the given Bernoulli function of $\uu$) turns out to be very convenient:
\[
w=w(b)=-\int_{0}^b A_0(s)ds\,,
\]
where $A_0$ is the function in Remark~\ref{R:aw}. It is then easy to check that $b(w)$ (the inverse of the function $w(b)$) and $A(w)$ are related as follows:
\begin{align}\label{eq.bA}
b'(w)=\frac{1}{w'(b(w))}=\frac{-1}{A_0(b(w))}=\frac{-1}{A(w)} \Longleftrightarrow b'(w)A(w)=-1\,.
\end{align}

\noindent Now let us remark that, since $g(\partial_u, \partial_v)=g(\uu,\uu \times \nabla b)=0$, in the (local) coordinates $(u,v,w)$  we can write the Euclidean metric tensor as
\begin{equation}\label{eq:det}
g=\left(\begin{array}{lll}
A & 0 & A U \\
0 & X & X V \\
A U & X V & g_{33}
\end{array}\right)\,.
\end{equation}
Here $X=|\uu \times \nabla b|^2>0$, $U=g_{13}/A$ and $V=g_{23}/X$ are analytic functions of $u,v,w$, and we have written the metric this way for later convenience, to obtain simpler formulas. In the following lemma we deduce from Equation~\eqref{eq.bA} that $\det(g)=1$ (i.e., the change of coordinates is volume-preserving), and therefore the function $g_{33}$ can be written in terms of the other functions:
$$g_{33}=\frac{1}{A X}+ A U^2 + X V^2\,.$$

\begin{lemma}
The determinant of the metric $g$ in coordinates $(u,v,w)$, cf. Equation~\eqref{eq:det}, is equal to $1$.
\end{lemma}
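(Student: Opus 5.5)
The plan is to compute $\sqrt{\det g}$ as the value of the Riemannian volume form on the coordinate frame $(\partial_u,\partial_v,\partial_w)$, and to evaluate that value with the cross product, whose definition via the Hodge star ties it to $\mathrm{vol}_g$. In any local coordinates,
$$
\mathrm{vol}_g(\partial_u,\partial_v,\partial_w)=\pm\sqrt{\det(g)}\,.
$$
So it suffices to show that $\mathrm{vol}_g(\partial_u,\partial_v,\partial_w)=\pm1$.

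The first step is to record the identity
$$
\mathrm{vol}_g(X,Y,Z)=g(X\times Y,Z)\,.
$$
This follows directly from the definition $X\times Y=(\ast(X^\flat\wedge Y^\flat))^\sharp$ together with the defining property of $\ast$ on $2$-forms, namely $(\ast\alpha)(Z)\,\mathrm{vol}_g=\alpha\wedge Z^\flat$, with the sign fixed by the paper's convention.

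Now insert the coordinate fields from~\eqref{coord_vect}, namely $\partial_u=\uu$ and $\partial_v=\uu\times\nabla b$. Then
$$
\mathrm{vol}_g(\partial_u,\partial_v,\partial_w)=g\big(\uu\times(\uu\times\nabla b),\partial_w\big)\,.
$$
By the triple product identity~\eqref{triple},
$$
\uu\times(\uu\times\nabla b)=g(\uu,\nabla b)\,\uu-|\uu|^2\,\nabla b=-A\,\nabla b\,.
$$
Here we used $\uu(b)=0$ and $|\uu|^2=A(w)$. Hence
$$
\mathrm{vol}_g(\partial_u,\partial_v,\partial_w)=-A(w)\,\mathrm{d}b(\partial_w)=-A(w)\,b'(w)=1\,,
$$
where the middle equality uses $b=b(w)$ and the last uses~\eqref{eq.bA}. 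Squaring gives $\det(g)=1$.

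The only delicate point is the orientation and sign bookkeeping when passing between $\mathrm{vol}_g$, the Hodge star and the cross product. This is harmless here, since the value obtained is $\pm1$ and we only need its square. The choice $w=-\int_0^b A_0$ was made precisely so that this computation yields exactly $1$, so I expect no real obstacle beyond that bookkeeping.
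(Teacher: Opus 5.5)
Your proof is correct and follows essentially the paper's argument: both evaluate $\mathrm{vol}_g(\partial_u,\partial_v,\partial_w)$ using $\partial_u=\uu$, $\partial_v=\uu\times\nabla b$, $b=b(w)$ and the normalization $b'(w)A(w)=-1$ from~\eqref{eq.bA}. Your use of $\mathrm{vol}_g(X,Y,Z)=g(X\times Y,Z)$ with the triple product identity is a slightly cleaner packaging than the paper's expansion of $\nabla b$ in the coordinate frame (the paper computes $\vartheta_3=|\nabla b|^2/b'(w)$), and squaring makes the paper's separate orientation check unnecessary.
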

\begin{proof}
According to the standard choice of orientation in the Euclidean space, the frame $\{\uu, \nabla b, \uu \times \nabla b\}$ is positively oriented. As  $\uu(b)=0$, this frame is actually orthogonal, so evaluating the volume form on its normalized version gives
$$1=\mathrm{vol}_g\left(\tfrac{1}{\sqrt{A}}\uu, \tfrac{1}{\abs{\nabla b}}\nabla b,\tfrac{1}{\sqrt{A}\abs{\nabla b}} \uu \times \nabla b \right)\,,$$
or, equivalently,
$$
\mathrm{vol}_g\left(\uu, \uu \times \nabla b, \nabla b \right)=-A(w)\abs{\nabla b}^2\,.
$$
Next we write $\nabla b$ in the $(u,v,w)$ coordinates, i.e., $\nabla b=\vartheta_1 \partial_u+\vartheta_2 \partial_v + \vartheta_3 \partial_w$, for some analytic functions $\vartheta_1,\vartheta_2,\vartheta_3$. Here the only relevant function is $\vartheta_3$, which is easily computed by taking the scalar product of $\nabla b$ with itself, thus yielding
$$\vartheta_3 = |\nabla b|^2/b'(w)\,.$$
Then, using Equation~\eqref{coord_vect}, we obtain from the computation above
$$
\frac{|\nabla b|^2}{b'(w)}\mathrm{vol}_g(\partial_u, \partial_v, \partial_w)=- A(w) |\nabla b|^2\,,
$$
which, in view of Equation~\eqref{eq.bA} (our choice of the coordinate $w$), simplifies to
$$\mathrm{vol}_g(\partial_u, \partial_v, \partial_w)=-b'(w)A(w) =1\,.$$
Now let us show that $\{\partial_u, \partial_v,  \partial_w\}$ is positively oriented. Indeed the transition matrix from this frame to the positively oriented frame $\{\uu, \nabla b, \uu \times \nabla b\}$ has determinant $-\vartheta_3$, which is positive due to~\eqref{eq.bA}. Therefore, $\mathrm{vol}_g=\sqrt{\det(g)} \, du \wedge dv \wedge dw$ and, combining with $\mathrm{vol}_g(\partial_u, \partial_v, \partial_w)=1$ above, we obtain $\det(g)=1$.
\end{proof}

In what follows, we shall work with the local coordinates $(u,v,w)$. We recall they are defined in a neighbourhood of any point in $\mathcal W$.

\begin{lemma}
In the coordinates $(u,v,w)$, the vorticity (that is, $\cu \uu$) is given by
\begin{equation}\label{curlcoord}
\cu \uu = A \partial_v U \partial_u + \left(A^{\prime} - A \partial_u U\right)\partial_v\,.
\end{equation}
\end{lemma}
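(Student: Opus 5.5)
The plan is a direct coordinate computation. The key facts are that $\det(g)=1$ in the coordinates $(u,v,w)$, by the previous lemma, and that $\{\partial_u,\partial_v,\partial_w\}$ is positively oriented. Together these give $\mathrm{vol}_g=du\wedge dv\wedge dw$. The definition $\curl X=(\ast\, \mathrm{d}X^\flat)^\sharp$ is then equivalent to $\iota_{\curl X}\mathrm{vol}_g=\mathrm{d}X^\flat$. So I will compute the $1$-form $\uu^\flat$, take its exterior derivative, and read off the components of $\cu \uu$ by contracting with the trivial volume form.

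First, since $\uu=\partial_u$, the $1$-form $\uu^\flat$ is the first row of the matrix~\eqref{eq:det}:
$$\uu^\flat=g_{11}\,du+g_{12}\,dv+g_{13}\,dw = A\,du + AU\,dw.$$
Here $A=A(w)$ depends only on $w$, as established in Step~1. Differentiating gives
$$\mathrm{d}\uu^\flat = A'\,dw\wedge du + \partial_u(AU)\,du\wedge dw+\partial_v(AU)\,dv\wedge dw = \left(A\partial_u U - A'\right)du\wedge dw + A\,\partial_v U\,dv\wedge dw,$$
where I have used that $\partial_u A=\partial_v A=0$.

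Next, write $\cu \uu=c^1\partial_u+c^2\partial_v+c^3\partial_w$. With $\mathrm{vol}_g=du\wedge dv\wedge dw$ the contraction is
$$\iota_{\cu\uu}\mathrm{vol}_g=c^1\,dv\wedge dw - c^2\,du\wedge dw + c^3\,du\wedge dv.$$
Comparing coefficients with $\mathrm{d}\uu^\flat$ gives $c^1=A\partial_vU$, $c^2=A'-A\partial_uU$ and $c^3=0$, which is exactly~\eqref{curlcoord}.

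The only delicate point is the sign and orientation convention, namely checking that $\ast$ and $\sharp$ combine to $\iota_{\curl X}\mathrm{vol}_g=\mathrm{d}X^\flat$. I can confirm this from the defining formula for $\ast$ in Section~\ref{sec:pre}, or in Euclidean coordinates, where $\curl X=(\partial_2X_3-\partial_3X_2,\dots)$ is the standard formula. Since $\det g=1$, the component formula $(\curl X)^i=\varepsilon^{ijk}\partial_jX_k$ holds without any metric factors. As a consistency check, $c^3=0$ agrees with $\cu\uu$ being tangent to the level sets of $b=b(w)$ (because $(\cu \uu)(b)=0$). It also agrees with $g(\uu,\cu\uu)$ and identity~\eqref{ident0.1}, which is expected once the formula is in hand.
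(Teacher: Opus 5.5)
Your proposal is correct and follows essentially the same route as the paper. Both arguments read $\uu^\flat = A\,du + AU\,dw$ off the first row of the metric, compute $\mathrm{d}\uu^\flat$, and dualize using $\mathrm{vol}_g = du\wedge dv\wedge dw$. The only difference is presentation: you use the equivalent characterization $\iota_{\cu \uu}\mathrm{vol}_g = \mathrm{d}\uu^\flat$, which you correctly check against the paper's definition of $\ast$, whereas the paper computes the basis duals $(\ast(dv\wedge dw))^\sharp=\partial_u$, $(\ast(dw\wedge du))^\sharp=\partial_v$ and $(\ast(du\wedge dv))^\sharp=\partial_w$.
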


\begin{proof}
Using the metric $g$, we deduce
\begin{align*}
&(\partial_u)^\flat =A du +AU dw\,,\\
&(\partial_v)^\flat =X dv +XV dw\,,\\
&(\partial_w)^\flat =AUdu +XV dv +\left(\frac{1}{AX}+AU^2 + XV^2\right) dw\,.
\end{align*}
We can easily see that
$$
(\partial_u)^\flat \wedge (\partial_v)^\flat \wedge (\partial_w)^\flat = du\wedge dv \wedge dw= \mathrm{vol}_g \, .
$$
Using the Hodge star definition and the above relation, we obtain $\ast(\partial_u)^\flat(\partial_v, \partial_w)=1$ and $\ast(\partial_u)^\flat(\partial_w, \partial_u)=$ $\ast(\partial_u)^\flat(\partial_u, \partial_v)=0$, so $\ast(\partial_u)^\flat=dv \wedge dw$ or, equivalently, $(\partial_u)^\flat=\ast  (dv \wedge dw)$. Proceeding similarly for $\ast(\partial_v)^\flat$ and $\ast(\partial_w)^\flat$, we finally get:
\begin{equation}\label{wedges}
\left(\ast (dv \wedge dw)\right)^\sharp = \partial_u\,, \qquad
\left(\ast (dw \wedge du)\right)^\sharp = \partial_v\,, \qquad
\left(\ast (du \wedge dv)\right)^\sharp = \partial_w\,.
\end{equation}
Since $d\uu^\flat =d(\partial_u)^\flat =(A^\prime - A \partial_u U) dw \wedge du + A \partial_v U dv \wedge dw$, the above relations give us the expression~\eqref{curlcoord}.
\end{proof}

A straightforward computation gives
$$
\nabla w = AX\left(\partial_w-U\partial_u-V\partial_v\right)\,,
$$
and then
$$|\nabla w|^2 = \nabla w(w)=AX\,.$$
It is also convenient to introduce the following normal vector field:
\begin{equation}
\nn:=\frac{1}{|\nabla w|^{2}} \nabla w=\partial_w-U\partial_u-V\partial_v\,.
\end{equation}


In these coordinates the stationary Euler equation $\nabla_{\uu} \uu -\frac{1}{2}\nabla |\uu|^2 + \nabla b =0$ reduces to a single scalar equation:
\begin{equation}\label{dux}
\partial_u U=\frac{AA'+1}{A^2}=:\alpha(w)\,,
\end{equation}
so the (local) function $U$ integrates to
\begin{equation}\label{xdef+}
U(u, v, w)=\alpha(w) u  + \tilde \beta(v,w)\,,
\end{equation}
for some analytic function $\tilde\beta(v,w)$. In particular, we easily compute that
\begin{equation}\label{eq.GA}
G:=g(\uu, \cu \uu) = A(w)^2 \partial_v \tilde \beta (v,w)\,,
\end{equation}
which is a function that does not depend on the coordinate $u$, as we expect because we proved before that $\uu(g(\uu, \cu \uu))=0$. By computing $dG \wedge dw$ and using~\eqref{wedges}, we obtain:
\begin{equation}\label{grGgrw}
\nabla G \times \nabla w = (A^2 \partial_{vv} \tilde \beta) \ \uu\,.
\end{equation}
Let us remark that, although the function $\tilde \beta(v,w)$ is only locally defined, its derivatives $\partial_v \tilde \beta$ and $\partial_{vv} \tilde \beta$ are defined on the whole set $\mathcal W$.

Next, it is convenient to define several functions that will appear in several parts of the proof. Although they are introduced using the local coordinates $(u,v,w)$, actually we show in Lemma~\ref{coord_free_quanti} that they admit coordinate free expressions and are globally defined on the set $\mathcal W$.

\begin{equation}\label{not}
\begin{split}
& \bar{A}^{\prime}:= A^\prime -2\alpha A, \qquad \bar{A}^{\prime \prime}:=A''-\alpha  A' + 2 A \alpha ^2 - 2 (A \alpha)^{\prime}, \\
&  D :=\nn(\cdot), \qquad
\Omega:= DX - 2 X \partial_v V, \qquad z:=X \partial_u V,\\
& p:=\frac{\partial X}{\partial u}, \quad q:=\frac{\partial X}{\partial v}, \quad
r:=\frac{\partial p}{\partial u}, \quad s:=\frac{\partial p}{\partial v}=\frac{\partial q}{\partial u}, \quad t := \frac{\partial q}{\partial v}, \\
& F := \frac{p^2}{A X^2} + z^2 + K X, \quad K := \frac{(\bar{A}^{\prime})^2}{4 A}.
\end{split}
\end{equation}

The following lemma provides coordinate free expressions for several quantities that have been introduced before, thus showing that they are globally defined functions on the set $\mathcal W$:

\begin{lemma} \label{coord_free_quanti}
The following scalars are well defined on the whole domain $\mathcal{W}$:
\begin{align*}
& A=|\uu|^2, \quad \bar{A}^{\prime}=-2g(\nabla_{\uu} \uu, \nn), \quad X= \frac{|\nabla w|^2}{|\uu|^2}\,, \\
& \partial_u U =-\frac{1}{|\uu|^2}g\left([\uu, \nn], \uu\right)\,,
\quad \partial_v U =\frac{1}{|\uu|^4}g\left([\uu\times \nabla w, \nn], \uu\right)=\frac{1}{|\uu|^4}g(\uu, \cu \uu)\,,\\
& \partial_u V = \frac{1}{|\nabla w|^{2}} g\left([\uu, \nn], \uu\times \nabla w\right)\,, \\
& \partial_v V =-\frac{1}{|\uu|^2}\left(\nn(|\uu|^2)+\frac{1}{|\nabla w|^2}g\left([\uu\times \nabla w, \nn], \uu\times \nabla w\right) \right)\,.
\end{align*}
Consequently all quantities in~\eqref{not} are also globally defined on $\mathcal{W}$, as well as any derivatives of them with respect to $u$ or $v$. Moreover, the functions $\bar A'(w)$ and $K(w)$ do not vanish at any point in $\mathcal{W}$.
\end{lemma}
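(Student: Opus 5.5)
Each listed expression is checked directly in the adapted coordinates $(u,v,w)$. Two facts make the resulting formulas chart-independent. First, the vector fields $\uu=\partial_u$, $\uu\times\nabla b=-b'(w)^{-1}\,\uu\times\nabla w\cdot(-1)$ (more precisely $\uu\times\nabla b=b'(w)\,\uu\times\nabla w$) and $\nn=\nabla w/|\nabla w|^2$ are globally defined on $\mathcal W$. Second, $w$ itself is a global function of $b$. Once each scalar is written through $g$, Lie brackets and $\nabla w$, it is manifestly global. Since derivatives with respect to $u$ and $v$ are derivatives along the global fields $\uu$ and $\uu\times\nabla b$, it is also global after applying $\partial_u$ or $\partial_v$. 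It then follows that everything in \eqref{not} is global, because each entry there is a polynomial or rational combination of these scalars and functions of $w$. The same holds for $D=\nn(\cdot)$, as $\nn$ is global.

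The concrete checks go as follows.

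$A=|\uu|^2$ is immediate.

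$X=|\partial_v|^2=|\uu\times\nabla b|^2=A\,|\nabla b|^2$. Writing $|\nabla b|^2=b'^2|\nabla w|^2=|\nabla w|^2/A^2$ by \eqref{eq.bA} gives $X=|\nabla w|^2/A$, consistent with $|\nabla w|^2=AX$.

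For the brackets, use $\nn=\partial_w-U\partial_u-V\partial_v$ and $[\partial_u,\partial_w]=0$. This gives
$$[\uu,\nn]=-(\partial_uU)\partial_u-(\partial_uV)\partial_v,\qquad [\partial_v,\nn]=-(\partial_vU)\partial_u-(\partial_vV)\partial_v .$$
Pairing these with $\partial_u$ and $\partial_v$, with $g(\partial_u,\partial_u)=A$, $g(\partial_v,\partial_v)=X$ and $g(\partial_u,\partial_v)=0$, yields the formulas for $\partial_uU$, $\partial_uV$ and $\partial_vU$. One must track the constant relating $\partial_v$ to $\uu\times\nabla w$: since $b'=-1/A$, we have $\uu\times\nabla w=-A\,\partial_v$, and this is where the factors $|\uu|^4$ and $|\nabla w|^2$ come from.

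The second expression for $\partial_vU$ follows from \eqref{curlcoord}. Indeed, $g(\uu,\cu\uu)=A\cdot A\,\partial_vU$, since $g(\partial_u,\partial_v)=0$.

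For $\partial_vV$, the extra term $\nn(|\uu|^2)=A'$ must be accounted for; it appears because $(\uu\times\nabla w)$ carries the factor $A(w)$, which $\nn$ differentiates. The formula is then checked by expanding $[A\partial_v,\nn]$.

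For $\bar A'$, compute $\nabla_{\uu}\uu$ from the Euler equation, $\nabla_{\uu}\uu=\tfrac12\nabla A-\nabla b$. Pairing with $\nn$ gives $g(\nabla_{\uu}\uu,\nn)=\tfrac12A'-b'=\tfrac12A'+1/A$. Using $\alpha=(AA'+1)/A^2$, this equals $-\tfrac12(A'-2\alpha A)$, which is the claimed formula.

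The main obstacle is the final nonvanishing claim, since the rest is bookkeeping. Because $K=(\bar A')^2/(4A)$ with $A>0$, it suffices to show $\bar A'\neq0$. By the formula above, $\bar A'=-A'-2/A$, so $\bar A'=0$ is equivalent to $AA'=-2$. I would try to show $g(\nabla_{\uu}\uu,\nabla w)\neq 0$, i.e. that the stream lines have normal curvature component along $\nabla b$ that never vanishes.

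My first attempt would use compactness of each torus $\{w=c\}$ in $\mathcal W$. Integrate $g(\nabla_{\uu}\uu,\nn)$ times a suitable density over the torus: apply the divergence theorem to $\nabla_{\uu}\uu=\Div(\uu\otimes\uu)$ on the region between two level tori, together with the fact that $\nabla b\neq0$ is normal. The aim is a sign-definite integral identity that forces $\tfrac12A'+1/A\neq 0$ on each level. Alternatively, if $\bar A'(w_0)=0$, analyticity together with \eqref{dux} might force $\partial_uU$ to take a specific value on that torus. A periodicity argument along the closed or dense orbits of $\uu$, in the spirit of Lemma~\ref{BPfuncdep}, applied to the linear growth of $U$ in $u$ in \eqref{xdef+}, would then give a contradiction. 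I would try the periodicity argument first.
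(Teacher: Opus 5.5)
\textbf{Gap: the nonvanishing of $\bar A'$ is not proved.} Your verification of the coordinate-free expressions is correct and is what the paper does. You expand $[\uu,\nn]$ and $[\uu\times\nabla w,\nn]$ in the frame $\partial_u,\partial_v$ using $\uu\times\nabla w=-A\,\partial_v$. You read off $g(\uu,\cu\uu)=A^2\partial_vU$ from \eqref{curlcoord}, get $g(\nabla_{\uu}\uu,\nn)=\tfrac12A'+1/A=-\tfrac12\bar A'$ from the Euler equation, and note that $\partial_u,\partial_v$ are derivatives along global fields. But you leave the claim $\bar A'\neq0$ open, and neither sketch would close it.

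The divergence-theorem route gives nothing. Indeed $\nabla_{\uu}\uu=-\nabla\Pi$ with $\Pi=b-\tfrac12A$ constant on each level torus, so $\int_R\nabla_{\uu}\uu\,dV=-\int_{\partial R}\Pi\,N\,dS=0$ automatically, because $\int_T N\,dS=0$ for every closed surface $T$. The flux of $\uu\otimes\uu$ through $\partial R$ is also zero since $\uu$ is tangent, so you obtain $0=0$ and no sign information.

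The periodicity route fails because $U=g(\partial_u,\partial_w)/A$ is not a global function. Changing the chart by $u\mapsto u+f(w)$ changes $U$ by $-f'(w)$. So the linear growth of $U$ in $u$ only records how the period lattice of the commuting pair $\uu$, $\uu\times\nabla b$ on the tori varies with $w$. It is compatible with periodic orbits, unlike the global function $G$ in Lemma~\ref{BPfuncdep}. The hypothesis $\bar A'(w_0)=0$ only fixes $\alpha(w_0)=-1/A(w_0)^2$, and nothing in that route turns this value into a contradiction.

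\textbf{The missing idea is already in your own formula.} You have $\nabla_{\uu}\uu=\nabla(\tfrac12A-b)=(\tfrac12A'-b')\nabla w=-\tfrac12\bar A'\,\nabla w$, which is entirely parallel to $\nabla w$, not merely paired with $\nn$. Since $\bar A'$ depends only on $w$, $\bar A'(w_0)=0$ gives $\nabla_{\uu}\uu\equiv0$ on the whole torus $\{w=w_0\}$, which is invariant under $\uu$. Any integral curve $\gamma$ of $\uu$ on it then satisfies $\ddot\gamma=0$. Hence $\gamma(t)=\gamma(0)+t\,\uu(\gamma(0))$ is a straight line traversed at speed $\sqrt{A(w_0)}>0$, which cannot stay in a compact surface. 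This is exactly the paper's argument.

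Equivalently, up to sign, $g(\nabla_{\uu}\uu,\nabla w)/|\nabla w|$ is the second fundamental form of the level torus evaluated on $\uu$. So $\bar A'(w_0)=0$ would make $\uu$ an asymptotic direction at every point of a compact surface in $\RR^3$, which is impossible at its elliptic points. The nonvanishing of $K=(\bar A')^2/(4A)$ then follows as you say.
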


\begin{proof}
By definition, $A(w)$ is globally defined on $\mathcal W$, and the same with $X$, which is given by $X=|\nabla w|^2/A$. For $\bar{A}^{\prime}$ it is enough to notice that the pressure as a function of $w$ is given by
\[
\Pi(w)=b(w)-\frac12A(w)\,,
\]
and therefore the stationary Euler equation and Equation~\eqref{eq.bA} imply
\begin{equation}\label{eq.nablauu}
\nabla_{\uu} \uu =-\frac{\bar{A}^\prime}{2}  \nabla w\,,
\end{equation}
which yields the coordinate free expression of $\bar{A}^\prime$.

As to the derivatives of $U$ and $V$, they are like ``structure constants'' for the orthogonal frame $\{\uu, \uu\times \nabla w, \nn\}$. A straightforward computation using the previously defined objects shows us that we have:
\begin{equation}\label{commut_with_n}
\begin{split}
&[\uu, \nn]=-\alpha \, \uu + \frac{\partial_u V}{A}\, \uu\times \nabla w\,,\\
&[\uu\times \nabla w, \nn]=A \partial_v\tilde{\beta}\, \uu-\frac{A^\prime + A \partial_v V}{A}\, \uu\times \nabla w\,.
\end{split}
\end{equation}
Applying $g(\cdot, \uu)$ on these equations we immediately get the stated global expressions for $\alpha=\partial_u U$ and $\partial_v\tilde{\beta}=\partial_v U$. Similarly, applying $g(\cdot, \uu\times \nabla w)$ on these equations we obtain $\partial_u V$ and $\partial_v V$.

The claim that all the quantities in~\eqref{not} are globally defined follows if we notice that the derivatives of any global function with respect to the coordinates $u,v$ are derivatives along the global vector fields $\uu$ and $\frac{-1}{A}\uu \times \nabla w$, respectively, so they are globally defined on $\mathcal W$.

Finally, we show that $\bar A'$ (and hence $K$) is nonvanishing in $\mathcal{W}$. Indeed, let us assume the contrary, i.e., that $\bar A'$ is zero at $w=w_0$, which defines a toroidal surface invariant by the vector field $\uu$. The Equation~\eqref{eq.nablauu} becomes $\nabla_{\uu} \uu = 0$ on such a surface, which implies that any orbit of $\uu$ on the surface must be at the same time a geodesic line in $\RR^3$, so a straight line, which contradicts the compactness of the level set. Therefore, $\bar A'$ has no zeros in $\mathcal W$, as claimed.
\end{proof}

\subsection{Step~2: geometric identities using the flatness of the metric}

 Our goal in this section is to obtain several identities from the fact that the metric $g$ is flat (it is the Euclidean metric written in coordinates $(u,v,w)$). It is well known that in dimension $3$ the curvature is determined by the Ricci tensor, which has $6$ components. Equating them with zero gives us a system of $6$ necessary and sufficient conditions for flatness. Next, we present these equations, where we use the notations ``K'' and ``R'' for sectional curvature and Ricci, respectively. The reason for these names is that the corresponding equations are computed using that both the sectional curvature and the Ricci curvature are zero for different planes, although this particular distinction will not be relevant for us. Among the $6$ equations, we only write four of them, which are those that we will use in this step. Remember that the functions $A$ and $X$ are positive on $\mathcal W$.

\begin{lemma}\label{pal_ident_general}
The following identities hold true. The involved functions are analytic and globally defined on the set $\mathcal W$.
\begin{equation}\label{G3+}
\bar{A}'\Omega -(A \partial_v \tilde{\beta} + z)^2=\frac{p^2}{A X^2}-\frac{2 r}{A X}\,,
\tag{K3+}
\end{equation}
\begin{equation}\label{G2+}
\bar{A}' D X = \frac{2 r}{A X} - 3 z^2 - \frac{3 p^2}{A X^2} - 2 A z \partial_v \tilde{\beta}  + A^2 (\partial_v \tilde{\beta})^2 - 2\bar{A}'' X\,,
\tag{K2+}
\end{equation}
\begin{equation}\label{ICM3+}
\partial_u z+\frac{\bar{A}' q}{2 X}+\frac{p}{X}(A \partial_v \tilde{\beta} + z) = 0\,, \tag{R3+}
\end{equation}
\begin{equation}\label{IICM3+}
\partial_u \Omega + \partial_v z -\frac{\bar{A}' p}{2 A} +
A \partial_{vv} \tilde \beta=0\,.
 \tag{R2+}
\end{equation}
\end{lemma}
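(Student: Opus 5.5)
The plan is to compute the Ricci tensor of the metric~\eqref{eq:det} directly in the coordinates $(u,v,w)$ and set it to zero. Since $g$ is the Euclidean metric written in these coordinates, this is legitimate. Working with raw Christoffel symbols would be unwieldy, so I would use the orthogonal frame already available. Set
\[
e_1=\uu=\partial_u,\qquad e_2=\partial_v,\qquad e_3=\nn=\partial_w-U\partial_u-V\partial_v,
\]
with $|e_1|^2=A$, $|e_2|^2=X$ and $|e_3|^2=1/(AX)$. The frame is orthogonal because $g(\partial_u,\nn)=AU-AU=0$, and similarly for $\partial_v$. Its brackets are $[e_1,e_2]=0$ together with the two relations~\eqref{commut_with_n}, rewritten in terms of $e_1,e_2$ using $\uu\times\nabla w=-A\partial_v$. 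So the structure functions are explicit: they involve $\alpha$, $\partial_uV$, $\partial_v\tilde\beta$, $A'$ and $\partial_vV$. The metric coefficients $A(w)$, $X$ and $1/(AX)$ have derivatives along the frame given by $e_1=\partial_u$, $e_2=\partial_v$ and $e_3=D$. For instance, $e_3(A)=A'$ and $e_3(X)=DX$.

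With this data, the Koszul formula for an orthogonal, non-holonomic frame gives all connection coefficients $g(\nabla_{e_i}e_j,e_k)$. In particular, $\nabla_{e_1}e_1=-\tfrac{\bar A'}{2}\nabla w$ from~\eqref{eq.nablauu}, which is why the combination $\bar A'=A'-2\alpha A$ appears everywhere. I would then compute the components $R(e_i,e_j,e_j,e_i)$ and the mixed Ricci components $\mathrm{Ric}(e_i,e_j)$, $i\neq j$.

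\eqref{G3+} and \eqref{G2+} should come from the vanishing of the sectional curvatures of the planes $\{e_1,e_2\}$ and $\{e_1,e_3\}$ (or from linear combinations of diagonal Ricci components). There I will substitute $\partial_uU=\alpha$, $\partial_vU=\partial_v\tilde\beta$, and replace $\partial_w$-derivatives of $\alpha$ by $\bar A''$ via its definition. \eqref{ICM3+} and \eqref{IICM3+} should come from the off-diagonal components $\mathrm{Ric}(e_2,e_3)$ and $\mathrm{Ric}(e_1,e_3)$ respectively. Before matching, I will clear denominators by the appropriate powers of $A$ and $X$. The identities $r=\partial_u p$, $z=X\partial_uV$ and $\Omega=DX-2X\partial_vV$ are exactly the groupings that should emerge. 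In particular, $\Omega$ arises from $e_3(X)$ combined with the $\partial_vV$ term in $[e_2,e_3]$.

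Global definition and analyticity require no extra work. By Lemma~\ref{coord_free_quanti}, every quantity appearing ($A,\bar A',\bar A'',X,p,q,r,z,\Omega,\partial_v\tilde\beta,\partial_{vv}\tilde\beta$) is globally defined and analytic on $\mathcal W$. Since each identity holds in every local chart, it holds on all of $\mathcal W$.

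The main obstacle is purely computational bookkeeping. The frame is not holonomic, because $e_3$ has nonzero brackets with $e_1$ and $e_2$. It is also not normalized, so the Koszul terms involving derivatives of $A$ and $X$ must be tracked carefully. The key is then to recognize the combinations $\bar A'$, $\bar A''$, $\Omega$ and $F$-type terms in the output. As a sanity check, I would verify~\eqref{G3+} first in the special case $\partial_v\tilde\beta=0$ and $V=0$, where the metric is nearly diagonal. I expect relating $D(\alpha)$-type terms to $\bar A''$ through $\alpha=(AA'+1)/A^2$ to be the most error-prone step.
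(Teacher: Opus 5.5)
Your proposal is correct and follows the paper's proof. The paper sets to zero the same four curvature components with the same assignments: the sectional curvatures of $\mathrm{span}\{\uu,\uu\times\nabla w\}$ and $\mathrm{span}\{\uu,\nabla w\}$ give \eqref{G3+} and \eqref{G2+}, and $\Ric(\uu\times\nabla w,\nabla w)$, $\Ric(\uu,\nabla w)$ give \eqref{ICM3+} and \eqref{IICM3+}. It works in the orthogonal frame $\{\uu,\uu\times\nabla w,\nabla w\}$, which is your frame $\{\partial_u,\partial_v,\nn\}$ with the last two vectors rescaled by $-A$ and $AX$, and it lists the connection coefficients and brackets but delegates the curvature bookkeeping to Mathematica, whereas you would do it by hand via the Koszul formula.
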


\begin{proof}
The equations~\eqref{IICM3+} and~\eqref{ICM3+} correspond, respectively, to zero Ricci curvature conditions, that is
$\Ric(\uu, \nabla w)=0$ and $\Ric(\uu \times \nabla w, \nabla w)=0$.
The equations~\eqref{G2+} and~\eqref{G3+} correspond, respectively, to zero sectional curvature conditions for the planes $\mathrm{span}\{\uu, \nabla w\}$ and $\mathrm{span}\{\uu, \uu \times \nabla w\}$.
The following curvature computations can be done using Wolfram Mathematica~\footnote{Wolfram Research, Inc., Mathematica, Version 14.3, Champaign, IL (2025)}. To this end, let us first compute the Levi-Civita connection. For the orthogonal frame
\begin{equation}\label{ortoframe}
\{E_1= \uu, \ E_2=\uu \times \nabla w, \ E_3=\nabla w\}\,,
\end{equation}
which is globally defined on $\mathcal W$, we can easily compute the covariant derivatives from the definition:
\begin{equation*}
\begin{split}
&\nabla_{E_1} E_1 =-\frac{\bar{A}^\prime}{2}  E_3\,, \quad
\nabla_{E_1} E_2 =\frac{p}{2 X} E_2 - \frac{A}{2} \big(A \partial_v \tilde{\beta} +z\big) E_3\,,\\
&\nabla_{E_1} E_3=  \frac{\bar{A}' X}{2}E_1 + \frac{1}{2}\big(A \partial_v \tilde{\beta} +z\big)E_2 + \frac{p}{2X} E_3\,,  \\
& \nabla_{E_2} E_2=-\frac{Ap}{2} E_1 -\frac{Aq}{2X} E_2 -\frac{A^2 \Omega}{2} E_3\,, \\
& \nabla_{E_2} E_3 =  \frac{AX}{2}\big(A \partial_v \tilde{\beta} +z\big)E_1 + \frac{A\Omega}{2}E_2 - \frac{Aq}{2X} E_3\,,\\
& \nabla_{E_3} E_3 = \frac{p}{2} E_1 -\frac{q}{2X} E_2 +\frac{1}{2}(A^\prime X+ A\, DX) E_3\,,
\end{split}
\end{equation*}
and the Lie brackets:
\begin{equation*}
\begin{split}
&[E_1, E_2]=0, \ [E_2, E_3]=A^2X \partial_v \tilde{\beta}  E_1 -X \left(A \partial_v V+A'\right)E_2 -\frac{Aq}{X}E_3\,, \\
&[E_3, E_1]= AX \alpha \,  E_1 -z\, E_2 -\frac{p}{X}E_3.
\end{split}
\end{equation*}
Observe that all the coefficients in these equations are globally defined on $\mathcal W$.

With these expressions in hand, the aforementioned curvature terms can be computed from the definition. We have
\begin{equation*}
\begin{split}
\Ric(\uu, \nabla w)&=\tfrac{1}{A^2 X}g(R(E_1, E_2)E_2, E_3)\\
&=\tfrac{1}{A^2 X}g\left(\nabla_{E_1} \nabla_{E_2} E_2 - \nabla_{E_2} \nabla_{E_1} E_2, \, E_3\right)\\
&= \tfrac{A}{2}\left(\partial_u \Omega + \partial_v z -\frac{\bar{A}' p}{2 A} +
A \partial_{vv} \tilde \beta\right)\,,
\end{split}
\end{equation*}
which leads us to Equation~\eqref{IICM3+}.
Similarly,
\begin{equation*}
\begin{split}
\Ric(\uu \times \nabla w, \nabla w)&=\tfrac{1}{A}g(R(E_2, E_1)E_1, E_3)\\
&=\tfrac{1}{A}g\left(\nabla_{E_2} \nabla_{E_1} E_1 - \nabla_{E_1} \nabla_{E_1} E_2, \, E_3\right)\\
&= \tfrac{A}{4}\left(2X\partial_u z+\bar{A}' q + 2p(A \partial_v \tilde{\beta} + z)\right),
\end{split}
\end{equation*}
which implies Equation~\eqref{ICM3+}. Denoting the curvature tensor by $R$, we also have,
\begin{equation*}
\begin{split}
g(R(\uu, \uu \times \nabla w)\uu \times \nabla w, \uu)&=g(R(E_1, E_2)E_2, E_1)\\
&=g\left(\nabla_{E_1} \nabla_{E_2} E_2 - \nabla_{E_2} \nabla_{E_1} E_2, \, E_1\right)\\
&= -\tfrac{A^3X}{4}\left(\bar{A}'\Omega -(A \partial_v \tilde{\beta} + z)^2-\frac{p^2}{A X^2}+\frac{2 r}{A X}\right)\,,
\end{split}
\end{equation*}
which yields Equation~\eqref{G3+}. Finally,
\begin{equation*}
\begin{split}
&g(R(\uu,  \nabla w) \nabla w, \uu)=g(R(E_1, E_3)E_3, E_1)\\
&=g\left(\nabla_{E_1} \nabla_{E_3} E_3 - \nabla_{E_3} \nabla_{E_1} E_3-\nabla_{[E_1, E_3]}E_3, \, E_1\right)\\
&= -\tfrac{A^2X}{4}\left(\bar{A}' D X - \frac{2 r}{A X} + 3 z^2 + \frac{3 p^2}{A X^2} + 2 A z \partial_v \tilde{\beta}  - A^2 (\partial_v \tilde{\beta})^2 + 2\bar{A}'' X\right)\,,
\end{split}
\end{equation*}
which leads us to Equation~\eqref{G2+}.
\end{proof}

In the next lemma, we use these zero curvature equations to prove an identity that will be key in Step~3 to show that certain first integrals are functions of the coordinate $w$.
\begin{lemma}
The following identity holds true:
\begin{equation}\label{dFdu+}
\partial_u F = \tfrac{1}{2}A\bar{A}^{\prime}\partial_{vv} \tilde \beta
\end{equation}
on the whole set $\mathcal W$.
\end{lemma}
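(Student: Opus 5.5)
The identity is a purely algebraic consequence of the flatness relations in Lemma~\ref{pal_ident_general}. The plan is to differentiate $F$ in $u$, replace every $u$-derivative of $z$, $r$ and $\Omega$ using \eqref{ICM3+}, \eqref{G3+} and \eqref{IICM3+}, and check that what remains is exactly $\tfrac12 A\bar A'\partial_{vv}\tilde\beta$. Since $A$, $\bar A'$ and $K$ depend only on $w$, and $\partial_u X=p$, $\partial_u p=r$, we get
\[
\partial_u F=\frac{2pr}{AX^2}-\frac{2p^3}{AX^3}+2z\,\partial_u z+Kp .
\]

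\textbf{Step 1: eliminate $r$ and $\partial_u z$.} Equation \eqref{G3+} gives
\[
\frac{2r}{AX}=\frac{p^2}{AX^2}-\bar A'\Omega+(A\partial_v\tilde\beta+z)^2 .
\]
Equation \eqref{ICM3+} gives
\[
\partial_u z=-\frac{\bar A' q}{2X}-\frac{p}{X}(A\partial_v\tilde\beta+z).
\]
Substituting both, the cross terms $2Apz\partial_v\tilde\beta/X$ cancel, and we obtain
\[
\partial_u F=\frac{p}{X}\Big(-\frac{p^2}{AX^2}-\bar A'\Omega+A^2(\partial_v\tilde\beta)^2-z^2\Big)-\frac{\bar A' zq}{X}+Kp .
\]

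\textbf{Step 2: rewrite the target.} By \eqref{IICM3+} and $K=(\bar A')^2/(4A)$,
\[
\tfrac12 A\bar A'\partial_{vv}\tilde\beta=-\tfrac{\bar A'}{2}\big(\partial_u\Omega+\partial_v z\big)+Kp .
\]
The $Kp$ terms match. Since $\bar A'\neq0$ by Lemma~\ref{coord_free_quanti}, the claim reduces to
\begin{equation*}
\partial_u\Omega+\partial_v z=\frac{2p}{\bar A' X}\Big(\frac{p^2}{AX^2}+\bar A'\Omega-A^2(\partial_v\tilde\beta)^2+z^2\Big)+\frac{2zq}{X}.
\end{equation*}

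\textbf{Step 3: compute $\partial_u\Omega+\partial_v z$.} I would expand using $\Omega=DX-2X\partial_vV$ and $z=X\partial_uV$, and commute $\partial_u$ with $D=\partial_w-U\partial_u-V\partial_v$. With $\partial_uU=\alpha$ this gives $[\partial_u,D]=-\alpha\partial_u-\partial_uV\,\partial_v$, hence $\partial_u(DX)=Dp-\alpha p-(\partial_uV)q$. The mixed derivatives $X\partial_{uv}V$ coming from $\partial_u\Omega$ and from $\partial_v z$ partially cancel. To control $Dp=\partial_u(DX)+\dots$, I would differentiate \eqref{G2+} in $u$: its left side is $\bar A'\partial_u(DX)$, and its right side involves $\partial_u r$, $\partial_u z$ and $p$. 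I would then eliminate $\partial_u r$ with the $u$-derivative of \eqref{G3+}, which expresses $\partial_u r$ through $\partial_u\Omega$, and eliminate $\partial_u z$ again with \eqref{ICM3+}. The expression for $\bar A''$ in \eqref{not}, together with $\alpha=(AA'+1)/A^2$, should absorb the remaining $w$-dependent coefficients.

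\textbf{Main obstacle.} The hard part is Step 3: closing the system so that the third-order quantity $\partial_u r$ and the mixed derivative $\partial_{uv}V$ disappear. If the four written relations are insufficient, I would bring in the two unwritten flatness equations, namely the sectional curvature of $\mathrm{span}\{\uu\times\nabla w,\nabla w\}$ and $\Ric(\uu,\uu\times\nabla w)=0$. They can be computed from the connection coefficients already listed for the frame \eqref{ortoframe}. Failing that, I would verify the final polynomial identity symbolically (Mathematica), as was done for Lemma~\ref{pal_ident_general}.
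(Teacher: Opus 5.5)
Your Steps 1 and 2 are correct. The cross terms do cancel as you say, and via \eqref{IICM3+} and $K=(\bar A')^2/(4A)$ the lemma is indeed equivalent to your displayed identity for $\partial_u\Omega+\partial_v z$. But that identity is the whole content of the lemma, and Step 3 does not prove it; you leave open whether the four written relations are even enough. Differentiating \eqref{G2+} and \eqref{G3+} in $u$ and eliminating $\partial_u r$, as you propose, does remove $\partial_{uv}V$ from $\partial_u\Omega+\partial_v z$. It still leaves a term $-p\,\partial_vV$. That term is not a function of $w$ alone, so the explicit expressions for $\bar A''$ and $\alpha$ cannot absorb it; they play no role here. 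The commutator detour only introduces $Dp$, a $w$-derivative that none of the available relations controls. Neither the two remaining flatness equations nor computer algebra is needed.

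The missing observation is that \eqref{G2+} is in fact an algebraic relation. By the definition of $\Omega$ we have $DX=\Omega+2X\partial_vV$, so adding \eqref{G2+} and \eqref{G3+} eliminates both $r$ and $D$. With the definition of $F$ this gives
\[
\bar A'\Omega = KX-F-\bar A'X\,\partial_vV-\bar A''X+A^2(\partial_v\tilde\beta)^2 .
\]
Differentiate this in $u$ ($K,\bar A',\bar A''$ and $\partial_v\tilde\beta$ do not depend on $u$) and add $\bar A'\partial_vz=\bar A'(qz/X+X\partial_{uv}V)$. The mixed derivative cancels exactly, and
\[
\bar A'(\partial_u\Omega+\partial_vz)=Kp-\partial_uF-\bar A''p-\bar A'p\,\partial_vV+\frac{\bar A'qz}{X}.
\]
Insert this into your Step 2, then use your Step 1 formula for $\partial_uF$. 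You get
\[
\partial_uF-\tfrac12A\bar A'\partial_{vv}\tilde\beta=\frac{p}{2X}\Big(KX-F-\bar A'\Omega-\bar A'X\,\partial_vV-\bar A''X+A^2(\partial_v\tilde\beta)^2\Big),
\]
which vanishes by the first display.

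So only the four written relations are used. $\bar A''$ enters only as a coefficient that cancels. The third-order term $\partial_u r$ never appears, because you differentiate a relation from which $r$ has already been removed.

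This is in substance the paper's proof. The paper differentiates the same sum relation and replaces $\partial_u\Omega$ using \eqref{IICM3+}. It then eliminates $\bar A'\partial_vV$ using the difference of \eqref{G2+} and \eqref{G3+}, and $\bar A'q$ using \eqref{ICM3+}. Your Step 1 simply performs those eliminations first.
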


\begin{proof}
For simplicity, we denote $\cB:=\partial_v \tilde{\beta}$. Recall that this is a function of $v$ and $w$. Adding and subtracting~\eqref{G2+} and~\eqref{G3+}, we obtain, respectively, these identities
\begin{align}
& \bar{A}^{\prime} \Omega = K X - F - \bar{A}^{\prime} X \partial_v V - \bar{A}^{\prime \prime} X +A^2 \cB^2 \label{G2plusG3+}\,,\\
& \bar{A}^{\prime} X \partial_v V = \frac{2 r}{A X}-2 F+\left(2K-\bar{A}^{\prime \prime}\right) X -2A\cB z\,.\label{G3minusG2+}
\end{align}
Now, take the derivative $\partial_u$ of~\eqref{G2plusG3+} and replace $\partial_u \Omega$ from Equation~\eqref{IICM3+}.  This way we get
\begin{align*}
&  \bar{A}^{\prime} \partial_u\Omega = K p - \partial_u F - \bar{A}^{\prime} \left(p \partial_v V + X \partial_{uv} V\right)-\bar{A}^{\prime \prime} p \Longleftrightarrow\\
& \bar{A}^{\prime} \left(-\partial_v z + \frac{\bar{A}' p}{2 A}-A \partial_v \cB \right) = K p - \partial_u F - \bar{A}^{\prime} \left(p \partial_v V + X \partial_{uv} V\right)-\bar{A}^{\prime \prime} p \Longleftrightarrow\\
& \bar{A}^{\prime} \left(-q\partial_u V -X\partial_{uv} V\right) +2Kp -A \bar{A}^{\prime}\partial_v \cB = K p - \partial_u F - \bar{A}^{\prime} \left(p \partial_v V + X \partial_{uv} V\right)-\bar{A}^{\prime \prime} p.
\end{align*}

By simplifying the second derivative of $V$ and rearranging the terms, we get
\begin{equation}\label{pqsimpleq}
\partial_u F+\left(K + \bar{A}^{\prime \prime}\right)p + \bar{A}^{\prime} p \partial_v V = \bar{A}^{\prime} q \partial_u V + A\bar{A}^{\prime}\partial_v \cB\,.
\end{equation}
Next, on the left hand side we replace $\bar{A}^{\prime} \partial_v V$ from Equation~\eqref{G3minusG2+}, and on the right hand side we replace $\bar{A}^{\prime}q$ from Equation~\eqref{ICM3+} and $\partial_u V$ by $\frac{z}{X}$, cf. the definition of the function $z$. One then obtains
\begin{align*}
& \partial_u F+\left(K + \bar{A}^{\prime \prime}\right)p + p \left(\frac{2 r}{A X^2}- \frac{2F}{X}+ 2K -\bar{A}^{\prime \prime} -2A\cB \frac{z}{X}\right)\\
&=\left(-2X\partial_u z-2p(A \cB + z)\right) \frac{z}{X} +A\bar{A}^{\prime}\partial_v \cB\,,
\end{align*}
or, equivalently
$$
\partial_u F+ p \left(\frac{2 r}{A X^2}- \frac{2F}{X}+ 3K \right)=-\frac{\partial z^2}{\partial u}-2z^2\frac{p}{X}+A\bar{A}^{\prime}\partial_v \cB\,.
$$
Finally, replace $z^2=F-\frac{p^2}{A X^2} - K X$ only in the derivative term $\frac{\partial z^2}{\partial u}$:
\begin{align*}
&  \partial_u F + p \left(\frac{2 r}{A X^2}- \frac{2F}{X}+ 3K \right)=-\partial_u\left(F-\tfrac{p^2}{A X^2} - K X\right)-2z^2\frac{p}{X} +A\bar{A}^{\prime}\partial_v \cB\Leftrightarrow\\
&  \partial_u F + p \left(\frac{2 r}{A X^2}- \frac{2F}{X}+ 3K \right)=-\partial_u F +2\frac{p}{A X} \cdot \frac{r X - p^2}{X^2} + K p-2z^2\frac{p}{X} +A\bar{A}^{\prime}\partial_v \cB\Leftrightarrow\\
&  \partial_u F + p \left(- \frac{2F}{X}+ 3K \right)=-\partial_u F -2\frac{p^3}{A X^3} + K p-2z^2\frac{p}{X} +A\bar{A}^{\prime}\partial_v \cB \Leftrightarrow \\
&  2\partial_u F + \frac{p}{X} \left(- 2F + 3KX + 2\frac{p^2}{A X^2}-KX +2z^2\right)=A\bar{A}^{\prime}\partial_v \cB\,.
\end{align*}
Since the parenthesis term vanishes by definition of the function $F$, this yields the desired Equation~\eqref{dFdu+}.
\end{proof}

\subsection{Step~3: a new first integral}

Our goal in this section is to show that the function $F$ is a first integral of $\uu$ that is functionally dependent with the Bernoulli function $b$. To this end, we first prove that $G:=g(\uu, \cu \uu)$, which is a first integral of $\uu$ (see the beginning of Section~\ref{SS.step1}), is also functionally dependent with $b$ (and hence a function of $b$, or equivalently of $w$, in the set $\mathcal W$).

\begin{lemma}\label{GPfuncdep}
The first integral $g(\uu, \cu \uu)$ is functionally dependent with $b$. In particular, it is also a first integral of $\cu \uu$. Moreover, $F$ is a first integral of $\uu$ wherever it is defined (specifically, in the set $\mathcal W$).
\end{lemma}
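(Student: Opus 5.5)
The proof will mimic Lemma~\ref{BPfuncdep}: argue by contradiction on the regular neighbourhood $\mathcal W$, and use the periodicity of orbits, together with the identity~\eqref{dFdu+}, to force a linear-in-time growth that cannot occur.

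\emph{Setting up the contradiction.} Suppose $G=g(\uu,\cu\uu)=A^2\cB$ (with $\cB=\partial_v\tilde\beta$) is not functionally dependent on $b$ in $\mathcal W$. Both are analytic first integrals of $\uu$, so the set
$$\mathcal U=\{x\in\mathcal W:\ \nabla G\times\nabla b\neq 0\}$$
is open, dense and $\uu$-invariant. By~\eqref{grGgrw}, on $\mathcal U$ we have $\nabla G\times\nabla w=A^2\partial_{vv}\tilde\beta\,\uu$. Hence $\partial_{vv}\tilde\beta\neq0$ on $\mathcal U$, and $\partial_{vv}\tilde\beta$ is itself a first integral of $\uu$, since it depends only on $(v,w)$. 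On $\mathcal U$ the orbits of $\uu$ are the connected components of the common level sets of $b$ and $G$. These are compact curves in the invariant tori, hence closed orbits, say of period $T$.

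\emph{Integrating along an orbit.} Now integrate~\eqref{dFdu+} along such an orbit $\phi_t(x)$. Here $\partial_u$ is differentiation along $\uu$, and by Lemma~\ref{coord_free_quanti} the function $F$ is globally defined and single valued on $\mathcal W$. The right-hand side $\tfrac12A\bar A'\partial_{vv}\tilde\beta$ is constant along the orbit and nonzero, because $A>0$, $\bar A'\neq0$ by Lemma~\ref{coord_free_quanti}, and $\partial_{vv}\tilde\beta\neq0$ on $\mathcal U$. Therefore
$$F(\phi_T(x))=F(x)+\tfrac{T}{2}A\bar A'\partial_{vv}\tilde\beta\neq F(x),$$
which contradicts $\phi_T(x)=x$. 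So $\mathcal U=\emptyset$, and $G$ is functionally dependent on $b$ in $\mathcal W$, hence on all of $\Omega$ by analyticity.

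\emph{Remaining claims.} Since $\nabla b\neq0$ in $\mathcal W$, we get $G=G(w)$ there. Then $\cu\uu(G)=0$ in $\mathcal W$, because $\cu\uu(b)=0$, and by analyticity this holds on $\Omega$. Moreover $\cB=G/A^2$ depends only on $w$, so $\partial_{vv}\tilde\beta=0$ on $\mathcal W$. Then~\eqref{dFdu+} gives $\partial_uF=\uu(F)=0$, so $F$ is a first integral of $\uu$ on $\mathcal W$.

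\emph{Main obstacle.} The delicate point is justifying periodicity of orbits in $\mathcal U$. A priori the level sets of $(b,G)$ could contain singular points of $G|_{\text{torus}}$, so one must check that points of $\mathcal U$ lie on regular compact one-dimensional components. This holds because the rank condition defining $\mathcal U$ is invariant under the flow of $\uu$: along an orbit in $\mathcal U$ the common level set is locally a regular curve tangent to the nonvanishing field $\uu$. Its component is compact, so it is a circle, and the argument above goes through.
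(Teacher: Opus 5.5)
Your argument is correct in substance and uses the same mechanism as the paper's. The right-hand side of \eqref{dFdu+} is a first integral of $\uu$: you see this from $\tilde\beta=\tilde\beta(v,w)$, while the paper writes it as $\uu(F)=\vv(G)$ with $[\uu,\vv]=0$ and $\uu(G)=0$. Hence $F(\phi_t(x))=F(x)+ct$ along each orbit, with $c$ constant on that orbit.

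The only difference is how you close the argument. You take the contradiction from periodicity of the orbits in $\mathcal U$, and your justification of that is incomplete. The component of $\{b=c_1,\,G=c_2\}$ in the torus is compact, but a priori it may contain points outside $\mathcal U$. The component of its intersection with $\mathcal U$ is a regular curve, but it is not obviously compact. This can be repaired by analyticity. In a flow box $(s,y)$ for $\uu$ on the torus, $G=g(y)$ with $g$ analytic and nonconstant. Near a point of $\{G=c_2\}$ outside $\mathcal U$ (where $g'=0$), the level set is locally just the orbit through that point, so points of $\mathcal U$ at level $c_2$ cannot accumulate there.

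You do not need periodicity at all, though. The orbit stays in the compact level torus of $b$ through $x$, on which the continuous function $F$ is bounded, so $c\neq 0$ is already impossible. That is exactly how the paper concludes, and it argues directly rather than by contradiction. It first gets $\uu(F)=0$, then $\partial_{vv}\tilde\beta=0$ from \eqref{dFdu+}, and then $\nabla G\times\nabla w=0$ from \eqref{grGgrw}. Your order (first $\partial_{vv}\tilde\beta=0$, then $\uu(F)=0$) is equivalent.
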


\begin{proof}
Since $G=g(\uu, \cu \uu) = A(w)^2 \partial_v \tilde \beta (v,w)$ by Equation~\eqref{eq.GA}, we can equivalently write Equation~\eqref{dFdu+} in coordinate free terms as
\begin{equation}\label{dFduINVAR}
\uu(F) =\vv(G)\,,  \qquad \vv := -\frac{\bar{A}^{\prime}}{2A^2} \uu \times \nabla w\,.
\end{equation}
Consider the flow $\phi_t$ of $\uu$, which is defined for all $t\in \mathbb R$ because $\mathcal W$ is invariant under the flow of $\uu$. The above relation rewrites
\begin{equation}\label{dFduflow}
\tfrac{d}{dt} F(\phi_t(x))=\vv(G)(\phi_t(x))\,.
\end{equation}
But
$$
\tfrac{d}{dt} \vv(G)(\phi_t(x))=\uu(\vv(G))\circ \phi_t(x)=\vv(\uu(G))\circ \phi_t(x)=0\,,
$$
where we have used that $[\uu, \vv]=0$ and that $G$ is a first integral of $\uu$. Therefore
$$\vv(G)(\phi_t(x))=C(x)$$
is independent of $t$, and from Equation~\eqref{dFduflow} we deduce that
$$F(\phi_t(x))=C(x)t + F(x)\,,$$
which contradicts the fact that $F$ is bounded, unless $C \equiv 0$, thus implying that $\uu (F)=0$ in $\mathcal W$ (and, by analyticity, wherever $F$ is defined). Finally, from Equation~\eqref{dFdu+} we obtain that $\partial_{vv} \tilde \beta=0$, and this fact together with Equation~\eqref{grGgrw} imply that $\nabla G \times \nabla w=0$ on $\mathcal W$. Accordingly, $G$ and $b$ (which is a function of $w$) are functionally dependent in $\mathcal W$.
\end{proof}

Since $\nabla w\neq 0$ at any point of $\mathcal W$, in view of Lemma~\ref{GPfuncdep} we can write
$$g(\cu \uu , \uu) = k(w)$$
for some analytic function $k$, which translates into
\begin{equation}\label{dvx}
\partial_v U = \frac{k(w)}{A^2}=:\beta(w)\,,
\end{equation}
(recall that, by definition, $\partial_v U = \partial_v \tilde \beta$). Therefore, from now on one can assume that locally,
\begin{equation}\label{xdef}
U(u, v, w)=\alpha(w) u  + \beta(w) v\,.
\end{equation}
This expression is valid only in the coordinate chart $(u,v,w)$, but the quantities $\partial_u U, \partial_v U$ are globally defined on the set $\mathcal{W}$, as they admit coordinate free expressions, cf. Lemma~\ref{coord_free_quanti}.

\begin{remark}\label{Rem_zerocurv}
Taking into account the local equation~\eqref{xdef}, the zero curvature conditions in Lemma~\ref{pal_ident_general} get a simpler form. We supplement them with the remaining two that we did not include in the aforementioned lemma. The new identities~\eqref{G1} and~\eqref{IICM1} correspond, respectively, to zero sectional curvature condition for the plane $\mathrm{span}\{\uu \times \nabla w, \nabla w\}$, and to zero Ricci curvature condition $\Ric(\uu, \uu \times \nabla w)=0$.
\begin{equation}\label{G3}
\bar{A}'\Omega -(A \beta + z)^2=\frac{p^2}{A X^2}-\frac{2 r}{A X}\,,
\tag{K3}
\end{equation}

\begin{equation}\label{G2}
\bar{A}' D X = \frac{2 r}{A X} - 3 z^2 - \frac{3 p^2}{A X^2} - 2 A \beta z + A^2 \beta^2 - 2\bar{A}^{\prime \prime} X\,,
\tag{K2}
\end{equation}

\begin{equation}\label{G1}
 -\frac{p^2}{AX^2}-z^2+2 A \beta z = \left(2 A \partial_v V - A^{\prime}\right) \Omega
-2 A \, D \Omega + \frac{2}{X^2} \partial_v q - \frac{4 q^2}{X^{3}}-3 A^{2} \beta^2\,,
\tag{K1}
\end{equation}

\begin{equation}\label{ICM3}
\partial_u z+\frac{\bar{A}' q}{2 X}+\frac{p}{X}(A \beta + z) = 0\,,
\tag{R3}
\end{equation}

\begin{equation}\label{IICM3}
\partial_u \Omega + \partial_v z -\frac{\bar{A}' p}{2 A}=0\,,
\tag{R2}
\end{equation}

\begin{equation}\label{IICM1}
\frac{1}{A X^2} \partial_v p - \frac{2 p q}{A X^3}+\frac{z}{X} \Omega+D z+(A \beta)^{\prime}-2 A \alpha \beta+ A' \beta =0\,.
\tag{R1}
\end{equation}
It is important to emphasize that, although we used the local equation~\eqref{xdef} to obtain these identities, all the quantities involved in these equations are globally defined (see Lemma~\ref{coord_free_quanti}), so they hold on the whole set $\mathcal{W}$.
\end{remark}

By Lemma~\ref{GPfuncdep} we know that $F$ is a first integral of $\uu$, i.e.,
\begin{equation}\label{dFdu}
\partial_u F = 0\,.
\end{equation}
We claim that, in fact, $F=F(w)$ on $\mathcal W$. To prove this, it is convenient to work in the open set
$$\mathcal{W}_{(p)} := \{x\in \mathcal{W}:\, p(x)\neq 0\}\,.$$
Actually, $\mathcal W_{(p)}$ is dense in $\mathcal W$. Indeed, by analyticity, it is enough to show that $\mathcal{W}_{(p)} \neq \emptyset$. Assume that
$p=0$ on a whole toroidal surface $\{w=w_0\}$. The following lemma shows that the Gaussian curvature of the level set $\{w=w_0\}$ vanishes:

\begin{lemma}
Assume that $p=0$ on the surface $\{w=w_0\}$. Then the Gaussian curvature $\mathcal K=0$ on the toroidal surface $\{w=w_0\}$.
\end{lemma}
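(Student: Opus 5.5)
The plan is to compute the intrinsic Gaussian curvature of the level set $\Sigma:=\{w=w_0\}$ directly from its first fundamental form in the adapted coordinates of Step~1. No use of the flatness identities of Lemma~\ref{pal_ident_general} should be needed.

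On $\Sigma$ the coordinates $(u,v)$ from~\eqref{coord_vect} are local coordinates, since $\partial_u=\uu$ and $\partial_v=\uu\times\nabla b$ are tangent to the level sets of $b$, equivalently of $w$. Restricting the metric~\eqref{eq:det} to $\{w=w_0\}$, i.e.\ setting $dw=0$, the induced metric is
\[
g|_{\Sigma}=A(w_0)\,du^2+X(u,v,w_0)\,dv^2 .
\]
This metric is diagonal because $g(\partial_u,\partial_v)=0$. Its first coefficient $E:=A(w_0)$ is a positive constant on $\Sigma$, and its second coefficient is $G:=X>0$.

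Next I would apply the classical formula for the Gaussian curvature of an orthogonal metric $E\,du^2+G\,dv^2$:
\[
\mathcal K=-\frac{1}{2\sqrt{EG}}\left[\partial_u\!\left(\frac{\partial_u G}{\sqrt{EG}}\right)+\partial_v\!\left(\frac{\partial_v E}{\sqrt{EG}}\right)\right].
\]
Since $E$ is constant, $\partial_v E=0$ and the second term drops. We have $\partial_u G=\partial_u X=p$, so
\[
\mathcal K=-\frac{1}{2\sqrt{AX}}\,\partial_u\!\left(\frac{p}{\sqrt{AX}}\right)=-\frac{1}{2AX}\left(r-\frac{p^2}{2X}\right),
\]
using $r=\partial_u p$ from~\eqref{not}.

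The hypothesis is $p=0$ on all of $\Sigma$. Because $\partial_u=\uu$ is tangent to $\Sigma$, differentiating along $\uu$ gives $r=\partial_u p=0$ on $\Sigma$ as well. Hence $\mathcal K=0$ at every point of $\Sigma$.

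The only point needing care is that the chart $(u,v,w)$ is merely local. However, $p=\uu(X)$ and $r=\uu(p)$ are globally defined on $\mathcal W$ by Lemma~\ref{coord_free_quanti}, and $\mathcal K$ is intrinsic. The pointwise computation above therefore holds in a chart around each point of the torus, which gives $\mathcal K\equiv0$ on all of $\Sigma$.

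I do not expect a real obstacle. The main check is that the induced metric has no cross term and that $E=A$ is constant on $\Sigma$, both of which are immediate from~\eqref{eq:det} and $A=A(w)$.
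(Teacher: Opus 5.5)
Your proof is correct and is essentially the paper's argument: you restrict the metric to $\{w=w_0\}$ to get $A(w_0)\,du^2+X\,dv^2$, and your expression $\mathcal K=-\frac{1}{2AX}\bigl(r-\frac{p^2}{2X}\bigr)$ coincides with the paper's formula~\eqref{gauss_int}. You then use that $p\equiv 0$ on the surface forces $r=\partial_u p=0$ there, since $\partial_u=\uu$ is tangent to it; the only cosmetic difference is that you quote the classical curvature formula for orthogonal coordinates, whereas the paper computes half the scalar curvature of the induced metric.
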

\begin{proof}
Since $p=\partial_u X$, then $\partial_{uu} X=0$ on $\{w=w_0\}$, and hence the Gaussian curvature of the
surface, which is given by the formula
\begin{equation}\label{gauss_int}
\mathcal{K} = \frac{(\partial_u X)^2-2 X \partial_{uu} X}{4 A X^2}\,,
\end{equation}
is $0$.
To obtain Equation~\eqref{gauss_int}, we first write the induced metric on the surface $\{w=w_0\}$ from the ambient metric $g$, which is
$$g|_{w=w_0}=A(w_0)du^2+X(u,v,w_0)dv^2\,.$$
Then, it is enough to recall that, for surfaces, the Gaussian curvature is half of the scalar curvature of the metric (Gauss' Egregium Theorem), which can be computed in a straightforward way, leading to the claimed equation.
\end{proof}

It is well known that an immersed compact surface in $\mathbb R^3$ cannot have zero Gaussian curvature, so we infer that $p$ cannot vanish on the whole level set $\{w=w_0\}$, as we wanted to show.

Before proving the main result of this step, we have to establish several identities that are crucial in the proof. This is the content of the following lemma. We observe that all the functions in the statement are well defined on the whole set $\mathcal{W}_{(p)}$. In particular, we observe that $\bar{A}^{\prime}$ is nonvanishing on $\mathcal W$ (cf. Lemma~\ref{coord_free_quanti}) and $F-KX>0$ on $\mathcal{W}_{(p)}$.

\begin{lemma}\label{qoverp}
In the open and dense set $\mathcal{W}_{(p)}$, the following identities hold true:
\begin{equation}\label{derivz}
\partial_v z =\frac{q}{p} \partial_u z +
z \partial_u \!\! \left(\frac{q}{p} \right).
\end{equation}

\begin{equation}\label{Fqp}
\partial_v F = 2 \left(F-KX\right)\partial_u \!\! \left(\frac{q}{p} \right).
\end{equation}

\begin{equation}\label{dvqp}
\partial_v\!\left(\frac{q}{p}\right)
= 2\left(\frac{q}{p}
+\frac{A\beta}{\bar{A}^{\prime}}
-\frac{KXz}{\bar{A}^{\prime}(F-KX)}\right)\partial_{u}\!\left(\frac{q}{p}\right).
\end{equation}
\end{lemma}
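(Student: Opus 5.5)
The three identities should all follow from the zero-curvature equations of Remark~\ref{Rem_zerocurv}, in particular \eqref{ICM3}, \eqref{IICM3} and \eqref{G3}, together with the relation $\partial_u F=0$ from \eqref{dFdu}. Everything is done on $\mathcal W_{(p)}$, where we may divide by $p$. I will use the notation $\rho:=q/p$ throughout.

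\textbf{Step 1: $\partial_uF=0$ and \eqref{derivz}.} First I expand $\partial_uF=0$ using the definition of $F$:
\[
\frac{2pr}{AX^2}-\frac{2p^3}{AX^3}+2z\,\partial_u z+Kp=0 .
\]
Then I substitute $\partial_u z$ from \eqref{ICM3}. This isolates the term $z\bar A'q/X$ and gives
\[
\bar A' z\rho=\frac{2r}{AX}-\frac{2p^2}{AX^2}-2z(A\beta+z)+KX .
\]
Next I eliminate $2r/(AX)$ with \eqref{G3}, which yields the key algebraic identity
\[
\bar A'\,z\rho = A^2\beta^2+2KX-F-\bar A'\Omega. \qquad (\ast)
\]
Now I differentiate $(\ast)$ in $u$, using $\partial_uF=0$, $\partial_uX=p$ and that $A,\beta,K,\bar A'$ depend only on $w$. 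This gives
\[
\bar A'\partial_u(z\rho)=2Kp-\bar A'\partial_u\Omega .
\]
By \eqref{IICM3} and $K=(\bar A')^2/(4A)$ we have $-\bar A'\partial_u\Omega=\bar A'\partial_v z-2Kp$. Hence $\partial_u(z\rho)=\partial_v z$, which is exactly \eqref{derivz} once expanded.

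\textbf{Step 2: \eqref{Fqp}.} I compute
\[
\partial_vF=\frac{2ps}{AX^2}-\frac{2p^2q}{AX^3}+2z\,\partial_vz+Kq .
\]
I replace $2z\,\partial_v z$ by $2z\rho\,\partial_uz+2z^2\partial_u\rho$ using \eqref{derivz}. I also write $\partial_u\rho=s/p-qr/p^2$. A direct check then shows
\[
\partial_vF-2(F-KX)\,\partial_u\rho=\rho\,\partial_uF .
\]
The right-hand side vanishes, which gives \eqref{Fqp}.

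\textbf{Step 3: \eqref{dvqp}, the main obstacle.} The difficulty is that $\partial_v\rho$ is not produced by any identity used so far. In particular, differentiating $(\ast)$ in $v$ only returns $\partial_v\Omega$, so that route is circular. I will instead use the compatibility condition $\partial_v(\partial_uz)=\partial_u(\partial_vz)$, where each side is computed from a different formula:
\begin{itemize}
\item the left-hand side from $v$-differentiating \eqref{ICM3};
\item the right-hand side from $u$-differentiating \eqref{derivz}, with $\partial_uz$ again taken from \eqref{ICM3}.
\end{itemize}
In the $v$-derivative of \eqref{ICM3} the terms $t=\partial_vq$ and $s$ appear. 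I write them as
\[
s=r\rho+p\,\partial_u\rho,\qquad t=s\rho+p\,\partial_v\rho .
\]
Then $\partial_v\rho$ enters linearly with coefficient $-\bar A'p/(2X)$, which is nonzero on $\mathcal W_{(p)}$ because $\bar A'\neq0$ (Lemma~\ref{coord_free_quanti}). This lets me solve for $\partial_v\rho$.

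The remaining work is simplification. Terms in $r$ are eliminated through the formula for $\bar A'z\rho$ in Step~1. Terms $z^2+p^2/(AX^2)$ are replaced by $F-KX$. Terms $\partial_v z$ are replaced using \eqref{derivz}. After this, all terms not proportional to $\partial_u\rho$ should cancel, leaving the coefficient $2\bigl(\rho+A\beta/\bar A'-KXz/(\bar A'(F-KX))\bigr)$.

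I expect this bookkeeping to be the delicate part. If the cancellation does not close, the fallback is to also bring in \eqref{IICM1}, which contains $\partial_vp$, to supply the missing relation.
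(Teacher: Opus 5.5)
Your Steps~1 and~2 are correct. The identity $(\ast)$ is exactly \eqref{pq_proportional}. You obtain it from $\partial_uF=0$, \eqref{ICM3} and \eqref{G3}, whereas the paper goes through \eqref{pqsimpleq_reduced} and \eqref{G2plusG3+}. Both routes are valid, and yours is somewhat more self-contained. From there, your derivations of \eqref{derivz} and \eqref{Fqp} coincide with the paper's.

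Step~3 uses the same compatibility argument as the paper: $\partial_v$ of \eqref{ICM3} compared with $\partial_{uv}z=\partial_{uu}(\rho z)$. However, your plan is missing the ingredient that actually closes it. The $u$-derivative of \eqref{derivz} produces a term $z\,\partial_{uu}\rho$, and none of your listed substitutions removes it. If you carry out the computation, all terms in $r$ and in $p^2$ cancel by themselves, so eliminating $r$ via Step~1 is unnecessary. What remains is
\[
\partial_v\rho=2\Bigl(\rho+\frac{A\beta}{\bar A'}\Bigr)\partial_u\rho-\frac{2Xz}{\bar A'\,p}\,\partial_{uu}\rho .
\]
So your expectation that ``all terms not proportional to $\partial_u\rho$ cancel'' fails.

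The missing relation comes from differentiating \eqref{Fqp} in $u$ and using $\partial_u\partial_vF=\partial_v\partial_uF=0$ together with $\partial_u(F-KX)=-Kp$. This gives
\[
\partial_{uu}\rho=\frac{Kp}{F-KX}\,\partial_u\rho .
\]
The division is legitimate because $F-KX=p^2/(AX^2)+z^2>0$ on $\mathcal W_{(p)}$. Substituting this produces exactly the term $-KXz/(\bar A'(F-KX))$ in \eqref{dvqp}.

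Note that this term has $F-KX$ in the denominator. Your proposed replacement of $z^2+p^2/(AX^2)$ by $F-KX$ can only put it in numerators, so it cannot generate this term. Your fallback to \eqref{IICM1} is also not what is needed.
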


\begin{proof}
By definition of $F$,
$$\partial_u F = \frac{2p}{AX} \cdot \frac{X \partial_u p -p^2}{X^2}+2 z\partial_u z +Kp$$
and
$$\partial_v F= \frac{2p}{AX} \cdot \frac{X \partial_v p -pq}{X^2}+2 z\partial_v z +Kq\,,$$
so using that $F$ is a first integral of $\uu$, i.e., $\partial_u F=0$, the linear combination $p\partial_v F-q\partial_u F$ reads:
\begin{equation}\label{sillyF}
p\partial_v F = \frac{2p}{AX^2}\left(p \partial_u q - q\partial_u p \right) +2z\left(p\partial_v z - q\partial_u z\right)\,.
\end{equation}
On the set $\mathcal{W}_{(p)}$ we can divide by $p$ and rewrite the above identity as:
\begin{equation}\label{sillyFoverp}
\partial_v F = 2 \frac{p^2}{AX^2}\partial_u \!\! \left(\frac{q}{p} \right) +2z\left(\partial_v z - \frac{q}{p} \partial_u z\right)\,.
\end{equation}

Since $\partial_u F=0$ and $\partial_{vv} \tilde{\beta}=0$, Equation~\eqref{pqsimpleq} reads
\begin{equation}\label{pqsimpleq_reduced}
\left(K + \bar{A}^{\prime \prime} + \bar{A}^{\prime} \partial_v V\right)p = (\bar{A}^{\prime} \partial_u V)\, q\,,
\end{equation}
and replacing $\bar{A}^{\prime}  \partial_v V$ from Equation~\eqref{G2plusG3+} and $\partial_u V$ by $z/X$, we obtain
\begin{equation}\label{pq_proportional}
-\bar{A}^\prime qz =(F -2KX + \bar{A}^{\prime}\Omega - A^2 \beta^2)p\,.
\end{equation}
So, in $\mathcal{W}_{(p)}$,
$$
-\bar{A}^\prime z \frac{q}{p}
= F - 2KX + \bar{A}^{\prime}\Omega - A^2 \beta^2.
$$
Then, taking the derivative $\partial_u$ of this identity, using again that $\partial_u F =0$ and replacing $\partial_u \Omega$ from Equation~\eqref{IICM3}, we conclude that
\begin{align*}
& -\bar{A}^\prime \frac{q}{p} \partial_u z -\bar{A}^\prime z\partial_u\left(\frac{q}{p}\right)
=  - 2Kp + \bar{A}^{\prime}\left(-\partial_v z + \frac{\bar{A}' p}{2 A}\right)\,,
\end{align*}
which after rearrangement of terms and simplifications gives Equation~\eqref{derivz}. To obtain Equation~\eqref{Fqp}, we simply take $\partial_v z - \frac{q}{p} \partial_u z$ from Equation~\eqref{derivz} and inject it into Equation~\eqref{sillyFoverp}.

It remains to prove Equation~\eqref{dvqp}. Multiplying Equation~\eqref{ICM3} by $2X/(p\bar{A}^{\prime})$, we obtain the following identity in $\mathcal{W}_{(p)}$:
\begin{equation}\label{fracqp}
\frac{q}{p}+\frac{2}{\bar{A}^{\prime}} \left( \frac{X \partial_u z}{p}+A \beta+z\right)=0\,.
\end{equation}
Additionally, from Equation~\eqref{derivz} we get
\begin{equation}\label{derivz+}
\partial_v z = \partial_u \!\! \left(\frac{q}{p}\, z \right)\,,
\quad
\partial_{uv} z = \partial_{uu} \!\! \left(\frac{q}{p}\, z \right)\,.
\end{equation}

\noindent Applying $\partial_v$ on Equation~\eqref{fracqp}, replacing the derivatives of $z$ using the previous equations~\eqref{derivz+} and using derivatives comutation $\partial_v p=\partial_u q$, we successively obtain
\[
\begin{aligned}
&\partial_v\!\left(\frac{q}{p}\right)
+\frac{2}{\bar{A}^{\prime}}\left[
\frac{\bigl(q\,\partial_u z+X\,\partial_{u v} z\bigr)p
- X\,\partial_u z \cdot \partial_u q}{p^2}
+\partial_v z\right]=0\,, \\[0.4em]
& \;
\partial_v\!\left(\frac{q}{p}\right)
+\frac{2}{\bar{A}^{\prime}}
\left[
\frac{q}{p}\,\partial_u z+\frac{X}{p}\,\partial_{uv} z
- \frac{X}{p^2}\,\partial_u z\,\partial_u q +\partial_v z
\right]=0\,, \\[0.4em]
& \;
\partial_v\!\left(\frac{q}{p}\right)
+\frac{2}{\bar{A}^{\prime}}
\left[
\frac{q}{p}\,\partial_u z
+\frac{X}{p}\left(
\frac{q}{p}\,\partial_{uu}z
+2\,\partial_u z\,\partial_u\!\left(\frac{q}{p}\right)
+z\,\partial_{uu}\!\left(\frac{q}{p}\right)
\right)\right.\\[0.4em]
& \qquad \qquad \qquad \left.
-\frac{X\,\partial_u z\,\partial_u q}{p^2}
+\frac{q}{p}\,\partial_u z
+z\,\partial_u\!\left(\frac{q}{p}\right)
\right]=0\,,
\end{aligned}
\]
\[
\begin{aligned}
&\partial_v\left(\frac{q}{p}\right)
+\frac{2}{A^{\prime}}\left[
2 \frac{q}{p} \partial_u z
+\frac{Xq^2}{p^2} \cdot \frac{q  \partial_{u u} z-\partial_u z  \partial_u q}{q^2}
+\frac{2 X}{p} \partial_{u} z \,  \partial_u \!\!\left(\frac{q}{p}\right)
+\frac{X z}{p} \partial_{u u}\left(\frac{q}{p}\right)
+z \partial_u\left(\frac{q}{p}\right)
\right]=0\,, \\[0.4em]
& \;
\partial_v\left(\frac{q}{p}\right)
+\frac{2}{A^{\prime}} \left[
2 \frac{q}{p} \partial_u z
+\frac{X q^2}{p^2} \partial_u\!\!\left(\frac{\partial_u z}{q}\right)
+\frac{2 X}{p} \partial_{u} z \partial_u\left(\frac{q}{p}\right)
+\frac{X z}{p} \partial_{u u}\left(\frac{q}{p}\right)
+z \partial_u\left(\frac{q}{p}\right)
\right]=0\,,
\end{aligned}
\]
$$
\partial_v\left(\frac{q}{p}\right)
+\frac{2}{A^{\prime}}\left[
2 \frac{q}{p} \partial_{u} z
+\partial_u\left(\frac{X q \partial_{u} z}{p^2}\right)
-\frac{\partial_u z}{q} \left(
p \frac{q^2}{p^2}+2 X \frac{q}{p} \partial_u\left(\frac{q}{p}\right)
\right)\right.
$$
\[
\begin{aligned}
& \qquad \qquad \qquad \left. +\frac{2 X}{p} \partial_u z \partial_u\left(\frac{q}{p}\right)
+\frac{Xz}{p} \partial_{u u}\left(\frac{q}{p}\right)
+z \partial_u\left(\frac{q}{p}\right)
\right]=0\,, \\[0.4em]
& \;
\partial_v\!\left(\frac{q}{p}\right)
+\frac{2}{\bar{A}'}\left[
\frac{q}{p}\,\partial_u z
+\partial_u\!\left(\frac{X q\,\partial_u z}{p^2}\right)
-\frac{2X}{p}\,\partial_u z\,\partial_u\!\left(\frac{q}{p}\right)\right.\\[0.4em]
& \qquad \qquad \qquad \left.
+\frac{2X}{p}\,\partial_u z\,\partial_u\!\left(\frac{q}{p}\right)
+\frac{X z}{p}\,\partial_{uu}\!\left(\frac{q}{p}\right)
+z\,\partial_u\!\left(\frac{q}{p}\right)
\right]=0\,, \\[0.4em]
& \partial_v\!\left(\frac{q}{p}\right)
+\frac{2}{\bar{A}'}\left[
\partial_u\!\left(
\frac{q}{p}z+\frac{Xq\,\partial_u z}{p^2}
\right)
+\frac{Xz}{p}\,\partial_{uu}\!\left(\frac{q}{p}\right)
\right]=0\,.
\end{aligned}
\]
Next, in this equation we replace $\partial_u z$ using Equation~\eqref{ICM3}, which yields
$$
\begin{aligned}
\Rightarrow\;&
\partial_v\!\left(\frac{q}{p}\right)
+\frac{2}{\bar{A}'}\Bigg[
\partial_u\!\left(
\frac{q}{p}z
+\frac{Xq}{p^2}
\left(
-\frac{\bar{A}'q}{2X}
-\frac{p(A\beta+z)}{X}
\right)
\right)
+\frac{Xz}{p}\,\partial_{uu}\!\left(\frac{q}{p}\right)
\Bigg]
=0\,,
\\[6pt]
\Rightarrow\;&
\partial_v\!\left(\frac{q}{p}\right)
+\frac{2}{\bar{A}'}\Bigg[
\partial_u\!\left(
-\frac{\bar{A}'}{2}\frac{q^2}{p^2}
-A\beta\frac{q}{p}
\right)
+\frac{Xz}{p}\,\partial_{uu}\!\left(\frac{q}{p}\right)
\Bigg]
=0\,.
\end{aligned}
$$
Finally, applying $\partial_u$ on Equation~\eqref{Fqp} and using that $\partial_u F=0$, we get
\[
0=\frac{\partial^2 F}{\partial u\,\partial v}
=2\left[-Kp\,\partial_u\!\left(\frac{q}{p}\right)
+(F-KX)\,\partial_{uu}\!\left(\frac{q}{p}\right)\right]\,,
\]
and thus on the set $\mathcal{W}_{(p)}$ we have
\[
\partial_{uu}\!\left(\frac{q}{p}\right)
=\frac{Kp}{F-KX}\partial_u\!\left(\frac{q}{p}\right)\,.
\]
Injecting this equation into the last form of our identity above we obtain
$$
\partial_v\!\left(\frac{q}{p}\right)
= \partial_u \left(\frac{q^2}{p^2}
+2\frac{A\beta}{\bar{A}^{\prime}}
\frac{q}{p}\right)-2\frac{KXz}{\bar{A}^{\prime}(F-KX)}\partial_{u}\!\left(\frac{q}{p}\right)\,,
$$
which immediately gives Equation~\eqref{dvqp}.
\end{proof}

We are ready to prove the main result of this section. Recall that the function $F$ is analytic and globally defined
(at least) on the set $\mathcal W$.

\begin{lemma}\label{BFfuncdep}
The first integral $F$ is functionally dependent with $b$. In particular, it is also a first integral of $\cu \uu$.
\end{lemma}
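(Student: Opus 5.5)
We need to show $\partial_v F=0$ on $\mathcal W$; since $\partial_u F=0$ already (Lemma~\ref{GPfuncdep}), this gives $\nabla F\times\nabla w=0$, i.e. $F=F(w)$, and then $F$ is also a first integral of $\cu\uu$. By Equation~\eqref{Fqp}, on the dense set $\mathcal W_{(p)}$ we have $\partial_v F=2(F-KX)\,\partial_u(q/p)$ with $F-KX>0$. So it suffices to prove that
\[
\Phi:=\partial_u\!\left(\frac{q}{p}\right)\equiv 0 \quad\text{on } \mathcal W_{(p)}\,;
\]
the conclusion then extends to $\mathcal W$ by density and analyticity, and to $\Omega$ by analyticity.

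The plan is to set $\Psi:=\partial_v F=2(F-KX)\Phi$, which is analytic and globally defined on $\mathcal W$, and derive a transport equation for it along the flow. First, differentiating \eqref{Fqp} in $u$ and using $\partial_u F=0$ gives $\partial_u\Psi=0$; this is the identity $\partial_{uu}(q/p)=\frac{Kp}{F-KX}\partial_u(q/p)$ already obtained in the proof of Lemma~\ref{qoverp}. Thus $\Psi$ is a first integral of $\uu$, and so is $\partial_v\Psi$, since $[\uu,\partial_v]=0$. Next I would compute $\partial_v\Psi$ using \eqref{dvqp} together with $\partial_v F=\Psi$ and $\partial_v X=q$. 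The relevant terms are
\[
\partial_v\Psi=2(\Psi-Kq)\Phi+2(F-KX)\,\partial_u\!\left(\partial_v\tfrac{q}{p}\right),
\]
and $\partial_u$ of \eqref{dvqp} is again expressed through $\Phi$, $\partial_u\Phi$ and $\partial_u z$. Each of these can be rewritten in terms of $\Phi$ using $\partial_u\Phi=\frac{Kp}{F-KX}\Phi$ and \eqref{ICM3}. The result should be an identity of the form $\partial_v\Psi=\Psi\cdot L$, where $L$ is an explicit function. Since $\partial_v\Psi$ and $\Psi$ are both $u$-invariant, wherever $\Psi\neq0$ the function $L$ is also a first integral of $\uu$.

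The aim is to extract a contradiction from this, just as in Lemmas~\ref{BPfuncdep} and~\ref{GPfuncdep}. One natural candidate is to show that $L$ contains a term growing linearly along orbits, for instance $q/p$ itself. Indeed $\partial_u(q/p)=\Phi=\Psi/(2(F-KX))$, and along an orbit of $\uu$ both $\Psi$ and $F$ are constant. Hence $\frac{d}{dt}(q/p)=\frac{\Psi}{2(F-KX)}$, which has constant sign, so $q/p$ is monotone along orbits. I would exploit this directly. The orbits of $\uu$ on the torus $\{w=w_0\}$ either are closed or accumulate densely, and $q/p$ is continuous wherever $p\neq0$. On an orbit segment staying in $\mathcal W_{(p)}$, $q/p$ would increase strictly. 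Following the orbit until it returns close to its starting point (recurrence on the compact torus) then gives a contradiction, unless the orbit hits $\{p=0\}$, where $q/p$ blows up.

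The main obstacle is handling the zero set of $p$. One option is to work instead with the analytic quantity $X\partial_u q-q\partial_u X=X^2\,\partial_u(q/X)$, or to note that $pq$-type combinations satisfy $\partial_u$-identities without division. Another option is to use that $\{p=0\}$ is an analytic subset of codimension $\geq1$ in the torus, invariant structure-wise along the $v$-direction, so that a generic orbit crosses it transversally. Near such a crossing, $q/p\to\pm\infty$ with signs fixed by $q\neq0$; monotonicity then forces a consistent sign pattern which, combined with periodicity or recurrence, is contradictory unless $\Psi=0$. If this becomes messy, the fallback is a fully algebraic route: combine \eqref{G1} and \eqref{IICM1} with the previous identities to show directly that $\Psi$ must vanish.
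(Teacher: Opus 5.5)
Your reduction is sound: $\Psi:=\partial_v F$ is globally defined on $\mathcal W$ and $\partial_u\Psi=\partial_v\partial_u F=0$. So $\Psi$ is a first integral of $\uu$, and it suffices to show $\Psi\equiv 0$.

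\textbf{The gap.} The step meant to produce the contradiction does not work. The identity $\frac{d}{dt}(q/p)=\Psi/\bigl(2(F-KX)\bigr)$ gives monotonicity of $q/p$ only on orbit segments inside $\mathcal W_{(p)}$, and \emph{every} orbit leaves $\mathcal W_{(p)}$. Indeed, $p=\partial_u X$ is the derivative of the bounded function $X$ along a closed or recurrent orbit, so it must vanish somewhere on it. At such points $F-KX=z^2$, and \eqref{pq_proportional} gives $qz=0$. Hence, wherever $q\neq 0$, the pole of $q/p$ coincides with a zero of $F-KX$.

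Across such a pole, $q/p$ can increase to $+\infty$ and re-enter from $-\infty$, exactly like $\tan t$. For example, take $(p,q)=(\cos t,\sin t)$ along the orbit, i.e.\ the vector $(p,q)$ winding once around the origin. A function that is strictly increasing between consecutive poles can perfectly well be periodic. So ``monotonicity plus recurrence'' and the sign-pattern argument you sketch cannot yield a contradiction. The alternatives you list are not carried to any point where a contradiction appears: the relation $\partial_v\Psi=L\Psi$, the quantity $X^2\partial_u(q/X)$, and an algebraic route through \eqref{G1} and \eqref{IICM1}.

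\textbf{The missing idea.} You need to remove the singularity \emph{before} integrating along the flow, i.e.\ write $\Psi$ as the $u$-derivative of a globally regular function.

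\begin{itemize}
\item Since $\partial_u(F-KX)=-Kp$, \eqref{Fqp} gives $\Psi=2\,\partial_u\bigl((F-KX)\tfrac{q}{p}\bigr)+2Kq$.
\item The function $\tilde\gamma=(F-KX)\tfrac{q}{p}=\tfrac{pq}{AX^2}+z\cdot\tfrac{qz}{p}$ is analytic on all of $\mathcal W$. This is because \eqref{pq_proportional} expresses $qz/p=-(F-2KX+\bar{A}'\Omega-A^2\beta^2)/\bar{A}'$ without dividing by $p$.
\item \eqref{ICM3} can be rewritten as $q=-\partial_u\bigl(\tfrac{2}{\bar{A}'}(A\beta+z)X\bigr)$, so $q$ is itself a $u$-derivative.
\end{itemize}

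Hence $\Psi=\partial_u H$ with $H=2\bigl(\tilde\gamma-\tfrac{2K}{\bar{A}'}(A\beta+z)X\bigr)$, which is continuous on each compact invariant torus. Then $H(\phi_t(x))=H(x)+t\,\Psi(x)$, and boundedness forces $\Psi=0$.

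This is the paper's argument. There it is phrased via the contradiction set $\mathcal U$ and the relation $\mathcal I(b,F)\,b'(w)\,\partial_v F=1$, which plays the role of your observation that $\partial_v F$ is constant along orbits.
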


\begin{proof}
We argue as in the proof of Lemma~\ref{BPfuncdep}. Assume that the first integrals $b$ and $F$ are independent in the set $\mathcal W$. Since they are analytic functions, the set
$$\mathcal U:= \{x\in \mathcal W: \text{rank}\big((\nabla F)_x,(\nabla b)_x\big)=2\}$$
is open and dense in $\mathcal W$. Moreover, being first integrals, the set $\mathcal U$ is invariant under the flow of $\uu$.
Then, there exists a function $\mathcal{I}=\mathcal{I}(b, F)$ such that
$$
\uu= \mathcal{I}(b, F) \   \nabla F \times \nabla b
$$
on $\mathcal U$. Since $\uu$ is nonvanishing on $\mathcal W$, we have that the function $\mathcal I$ does not vanish at any point in $\mathcal U$. Moreover, the set $\mathcal U$ is fibred by period orbits of $\uu$.

Taking the vector product with $\cu \uu$ we obtain
$$\uu\times \cu  = \mathcal{I}(b, F) (\cu \uu)(F) \, \nabla b\,,$$
and therefore
$$\mathcal{I}(b, F) \, (\cu \uu)(F)=1$$
on $\mathcal U$.
Since Equation~\eqref{ident0.1} implies that
\[
(\cu \uu)(F)= -\frac{1}{\abs{\uu}^2} (\uu\times \gr b)(F)
= b'(w) \partial_v F\,,
\]
we obtain
\begin{equation} \label{VS}
\mathcal{I}(b, F)b'(w) \partial_v F =1\,.
\end{equation}
On the other hand, let us observe that from Equation~\eqref{ICM3} we have
$$
\begin{aligned}
& X \partial_u z + \frac{\bar{A}^{\prime}}{2} q + p z + p A \beta=0 \Rightarrow q=-\frac{2}{\bar{A}^{\prime}}\left(X \partial_u z + z\partial_u X+p A \beta\right) \Rightarrow\\
& q =-\frac{2}{\bar{A}^{\prime}}\left(\partial_u(X z)+\partial_u(A \beta X)\right) =-\partial_u\!\!\left(\frac{2}{\bar{A}^{\prime}}(A \beta+z) X\right)\,.
\end{aligned}
$$
Combining this observation with Equation~\eqref{Fqp} we infer that
\begin{equation}\label{eq.use}
\partial_v F =2 K q+ 2 \partial_u \left((F-KX) \frac{q}{p}\right)
=2 \frac{\partial}{\partial u}\left(-\frac{2 K}{\bar{A}^{\prime}}(A \beta+z) X+\widetilde{\gamma}\right),
\end{equation}
which holds on an open and dense subset of $\mathcal U$. Here,
$$\widetilde{\gamma}:=(F-KX) \frac{q}{p}= \frac{pq}{AX^2}+\frac{qz^2}{p}$$
is a function, which is in fact globally defined on $\mathcal W$ due to the identity~\eqref{pq_proportional}. Accordingly, by continuity, Equation~\eqref{eq.use} is well defined on the whole set $\mathcal U$.

To complete the proof, we consider an integral curve $\phi_t(x)$ of the vector field $\uu$ in $\mathcal U$, which is periodic. Along such an orbit, Equation~\eqref{VS}, together with~\eqref{eq.use}, yield
$$
\frac{d}{d t}\left[-\frac{2 K}{\bar{A}^{\prime}}(A \beta+z(\phi_t(x))) X(\phi_t(x))+\widetilde{\gamma}(\phi_t(x))\right]=\frac{1}{C},
$$
where $C=2I(b, F) b^{\prime}(w)\neq 0$ is a constant. This is of course impossible on a periodic orbit, and therefore the set $\mathcal U$ is empty, thus showing that $F$ and $b$ are functionally dependent on $\mathcal W$ (and everywhere where $F$ is defined by analyticity). Proceeding as in the proof of Lemma~\ref{BPfuncdep}, we also conclude that $F$ is a first integral of $\cu\uu$.
\end{proof}

\subsection{Step~4: construction of a Killing symmetry}
Lemma~\ref{BFfuncdep} implies that $F$ is a function of $b$, and hence of $w$, in the set $\mathcal W$. Combining this fact with Equations~\eqref{Fqp} and~\eqref{dvqp}, we obtain that
\begin{equation}
\partial_u (q/p) = \partial_v(q/p)=0
\end{equation}
in an open and dense subset of $\mathcal W$. Since $p$ and $q$ are analytic functions, it follows that
$$q/p=\gamma^*(w)$$
for some function $\gamma^*$. In fact, $\gamma^*$ is analytic in $\mathcal{W}$. Indeed, if $\gamma^*$ were not defined for some value $w=w_0$, it would imply that $p=0$ on the whole toroidal surface $\{w=w_0\}$, which is impossible, as we proved before Lemma~\ref{qoverp}.

Next, we have to consider two different cases:
\begin{enumerate}
\item The function $q$ is zero, $q\equiv 0$, on the whole set $\mathcal W$, or
\item $q$ is not identically zero.
\end{enumerate}
The first case will be studied in Step~5, so in what follows let us assume that $q$ is not zero everywhere. Then, the function $\gamma^*(w)$ is not identically zero in $\mathcal W$, and being analytic, its zero set consists of finitely many values of $w$. Therefore, changing the set $\mathcal W$ if necessary, there is no loss of generality in assuming that $\gamma^*$ is nonvanishing on $\mathcal W$.

Since $\gamma^*\neq 0$ on $\mathcal W$, we can take the inverse to obtain
$$\frac{p}{q}=\frac{1}{\gamma^*(w)}=:\gamma(w)\,,$$
which is equivalent, taking into account the definition of the functions $p$ and $q$, to the following first order linear PDE for the function $X$:
$$\partial_u X -\gamma \partial_v X=0\,.$$
A straightforward application of the method of characteristics leads to
\begin{equation}\label{Xgam}
X= X(\gamma(w)u+v, w)\,,
\end{equation}
which holds on any local chart $(u,v,w)$ covering the set $\mathcal W$.

In the following lemma we prove that the function $\gamma(w)$ satisfies a first order ODE (of Bernoulli type). We recall that the functions $\alpha\equiv \alpha(w)$, $K\equiv K(w)$, $\bar{A}^{\prime}\equiv \bar{A}^{\prime}(w)$, $\bar{A}^{\prime \prime}\equiv \bar{A}^{\prime \prime}(w)$ were introduced in Section~\ref{SS.step1} and $\beta\equiv \beta(w)$ in Equation~\eqref{dvx}.

\begin{lemma}\label{RiccatiLemma}
The analytic function $\gamma\equiv \gamma(w)$ satisfies the following ODE
 \begin{equation}\label{Riccati}
\gamma^\prime - (\alpha-\beta\gamma)\gamma=\gamma\,\frac{K + \bar{A}^{\prime \prime}}{\bar{A}^{\prime}}
\end{equation}
in $\mathcal W$. Additionally, the following identity holds true
\begin{equation}\label{compat}
\gamma^\prime  -(\alpha-\beta \gamma)\gamma + \gamma \partial_v V - \partial_u V=0.
\end{equation}
\end{lemma}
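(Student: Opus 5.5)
The plan is to exploit the structure~\eqref{Xgam}, $X=X(\xi,w)$ with $\xi:=\gamma(w)u+v$, together with the vector field
$$Y:=\partial_u-\gamma(w)\,\partial_v ,$$
which annihilates $X$ and every function of $(\xi,w)$. Writing $X_\xi$ for $\partial_\xi X$, we have $q=X_\xi$, $p=\gamma X_\xi$ and $r=\gamma^2X_{\xi\xi}$, so $Y$ also kills $p$, $q$ and $r$. I would first show that the two identities~\eqref{Riccati} and~\eqref{compat} are equivalent. Then I would prove~\eqref{compat} directly.

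\emph{Step 1 (equivalence).} I would use the reduced identity~\eqref{pqsimpleq_reduced}, namely $\left(K+\bar A''+\bar A'\partial_vV\right)p=\bar A'\,\partial_uV\,q$. Substitute $p=\gamma q$ and divide by $q$; this is allowed on the open dense set where $q\neq 0$, and the result extends by analyticity. This gives
$$\gamma\,\partial_vV-\partial_uV=-\gamma\,\frac{K+\bar A''}{\bar A'} .$$
Subtracting this relation from~\eqref{compat} yields exactly~\eqref{Riccati}. So it suffices to prove~\eqref{compat}.

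\emph{Step 2 (computing $Y(DX)$).} In a local chart, using~\eqref{xdef},
$$DX=\partial_wX-U\partial_uX-V\partial_vX=X_w+\gamma'uX_\xi-(\alpha u+\beta v)\gamma X_\xi-VX_\xi .$$
Apply $Y$, using $Y(X_\xi)=Y(X_w)=0$, $Y(u)=1$ and $Y(v)=-\gamma$. A short computation gives
$$Y(DX)=X_\xi\left(\gamma'-(\alpha-\beta\gamma)\gamma+\gamma\partial_vV-\partial_uV\right).$$
Since $X_\xi=q\not\equiv0$, identity~\eqref{compat} is equivalent to $Y(DX)=0$. Note that $DX$, $\partial_uV$ and $\partial_vV$ are globally defined by Lemma~\ref{coord_free_quanti}, so this reduction is chart independent.

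\emph{Step 3 (showing $Y(DX)=0$).} This is the main point. I would use the zero-curvature identity~\eqref{G2},
$$\bar A'DX=\frac{2r}{AX}-3z^2-\frac{3p^2}{AX^2}-2A\beta z+A^2\beta^2-2\bar A''X .$$
Here $\bar A'\neq0$ and all the coefficients depend only on $w$. The terms involving $r$, $p$ and $X$ are killed by $Y$, so it remains to check that $Y(z)=0$. For this I use Lemma~\ref{BFfuncdep}: $F=F(w)$, hence
$$z^2=F-\frac{p^2}{AX^2}-KX$$
is killed by $Y$. It follows that $Y(z)=0$ on the set $\{z\neq0\}$. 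If that set is empty, then $z\equiv0$ and $Y(z)=0$ trivially. Otherwise $\{z\neq0\}$ is open and dense, and by analyticity $Y(z)=0$ everywhere in $\mathcal W$. Hence $Y(DX)=0$, which proves~\eqref{compat} and therefore~\eqref{Riccati}.

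The delicate parts are the sign bookkeeping in Step 2, in particular the $u$-linear term $\gamma'u$ coming from $\partial_w\xi$, and the density and analyticity arguments for dividing by $q$ and by $z$.
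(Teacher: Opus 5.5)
Your proposal is correct and follows the paper's proof essentially step by step. Like the paper, you reduce \eqref{Riccati} to \eqref{compat} via \eqref{pqsimpleq_reduced} with $p=\gamma q$, compute $\xi_0(DX)=q\bigl(\gamma'-(\alpha-\beta\gamma)\gamma+\gamma\partial_vV-\partial_uV\bigr)$ for $\xi_0=\partial_u-\gamma\partial_v$, and apply $\xi_0$ to \eqref{G2}. The only minor difference is how you get $\xi_0(z)=0$: the paper reads it off \eqref{derivz} (since $q/p$ depends only on $w$), while you derive it from $F=F(w)$ plus a density argument, which is equally valid.
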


\begin{proof}
Since $p = \gamma(w)q$ and $F=F(w)$ in the set $\mathcal W$, it follows that Equation~\eqref{pqsimpleq_reduced} is equivalent to
\begin{equation}\label{PDEy}
\left(K + \bar{A}^{\prime \prime}\right)\gamma + \bar{A}^{\prime} \gamma \partial_v V = \bar{A}^{\prime}  \partial_u V\,,
\end{equation}
and taking the derivative $\partial_u$, it implies
\begin{equation}\label{PDE2y}
\gamma \partial_{uv} V =  \partial_{uu} V\,.
\end{equation}

Next, we introduce the following function, which is identically zero according to Equation~\eqref{G2},
$$
\mathcal{G}_2:=\bar{A}' D X - \frac{2 r}{A X} + 3 z^2 + \frac{3 p^2}{A X^2} + 2 A \beta z - A^2 \beta^2 + 2\bar{A}^{\prime \prime} X\equiv 0\,,
$$
and the vector field
$$\xi_0 := \partial_u - \gamma\partial_v$$
on $\mathcal W$. Observe that Equations~\eqref{derivz} and~\eqref{Xgam} imply
$$
\xi_0(z)=\xi_0(X)=\xi_0(p)=\xi_0(r)=0\,.
$$
Now recall that $DX=\partial_w X - Up-Vq$ by definition of the involved quantities, and notice that $\partial_u X -\gamma \partial_v X=0$ implies
$$\partial_{uw} X -\gamma \partial_{vw} X=\gamma^\prime q$$
as well as
$$\partial_{uu} X -\gamma \partial_{uv} X=0\,, \qquad \partial_{uv} X -\gamma \partial_{vv} X=0\,.$$
Therefore, a straightforward computation shows that
\begin{equation*}
\begin{split}
\xi_0(DX)&=q\gamma' -(\alpha -\beta \gamma) p - q(\partial_u V-\gamma \partial_v V)\,.
\end{split}
\end{equation*}
These identities allow us to check that $\partial_u \mathcal{G}_2-\gamma \partial_v \mathcal{G}_2 \equiv 0$ is equivalent to the equation
$$
\bar{A}^\prime q  \, \left(\gamma' - \left(\alpha - \beta \gamma\right)\gamma  -\partial_u V  + \gamma\partial_v V\right)=0\,,
$$
thus proving~\eqref{compat} (because $\bar{A}^\prime$ and $q$ do not vanish in $\mathcal W$). Finally, combining with Equation~\eqref{PDEy} we obtain~\eqref{Riccati}, which is a Bernoulli ODE giving us $\gamma$ in terms of metric coefficients.
\end{proof}

We are ready to construct a Killing symmetry of the vector field $\uu$ in the set $\mathcal W$.

\begin{lemma}\label{killingness}
The vector field
$$\xi := \gamma_1(w)\left(\partial_u - \gamma(w)\partial_v\right)$$
is a Killing field (i.e., $\mathcal{L}_\xi g=0$) commuting with $\uu$ in $\mathcal W$, where
$$\gamma_1(w) := \exp \left(\int _1^w(\beta (t) \gamma (t)-\alpha (t))dt\right)\,.$$
\end{lemma}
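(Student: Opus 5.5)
We will verify the two claims directly in the local coordinates $(u,v,w)$, where $g$ is given by~\eqref{eq:det} with $\det g=1$ and $U=\alpha(w)u+\beta(w)v$ by~\eqref{xdef}. The commutation $[\xi,\uu]=0$ is immediate: $\uu=\partial_u$ and the coefficients $\gamma_1(w)$, $\gamma_1(w)\gamma(w)$ of $\xi$ depend only on $w$, so $[\partial_u,\xi]=0$. The substance lies in $\mathcal L_\xi g=0$. Since $\xi$ has only $\partial_u,\partial_v$ components with coefficients depending on $w$ alone, the Lie derivative reads
\[
(\mathcal L_\xi g)_{ij}=\xi(g_{ij})+g_{kj}\,\partial_i\xi^k+g_{ik}\,\partial_j\xi^k ,
\]
and the terms $\partial_i\xi^k$ are nonzero only for $i=3$ (the $w$-derivative). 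We will therefore check the six components in turn.

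For the components $(11),(12),(22)$ only $\xi(g_{ij})$ contributes. We have $\xi(A)=0$ because $A=A(w)$, and $g_{12}=0$. For $X$, Equation~\eqref{Xgam} gives $\xi_0(X)=0$ with $\xi_0=\partial_u-\gamma\partial_v$, hence $\xi(X)=\gamma_1\xi_0(X)=0$. Next come $(13)$ and $(23)$. Here $(\mathcal L_\xi g)_{13}=\xi(AU)+A\,\partial_w\xi^u=A\gamma_1(\alpha-\gamma\beta)+A\gamma_1'$, which vanishes exactly because $\gamma_1'=\gamma_1(\beta\gamma-\alpha)$; this is the role of the choice of $\gamma_1$. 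Similarly,
\[
(\mathcal L_\xi g)_{23}=\xi(XV)+X\,\partial_w\xi^v=X\gamma_1\xi_0(V)-X(\gamma_1\gamma)' ,
\]
using $\xi(X)=0$. Expanding $(\gamma_1\gamma)'=\gamma_1\bigl(\gamma'+(\beta\gamma-\alpha)\gamma\bigr)$, the condition becomes $\partial_uV-\gamma\partial_vV=\gamma'-(\alpha-\beta\gamma)\gamma$, which is precisely the identity~\eqref{compat} from Lemma~\ref{RiccatiLemma}.

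The remaining component is $(33)$:
\[
(\mathcal L_\xi g)_{33}=\xi(g_{33})+2g_{13}\gamma_1'-2g_{23}(\gamma_1\gamma)' ,
\]
with $g_{33}=\tfrac{1}{AX}+AU^2+XV^2$. Since $\xi(X)=0$, we get $\xi(g_{33})=2AU\gamma_1(\alpha-\gamma\beta)+2XV\xi(V)$. Substituting $\gamma_1'$ and $(\gamma_1\gamma)'$, the $U$-terms cancel against $2AU\gamma_1'$, and the $V$-terms cancel by the $(23)$ computation. Thus $(33)$ follows from the previous components, with no new information needed.

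The main obstacle is only bookkeeping: keeping track of signs in the $(23)$ entry so that it matches~\eqref{compat} exactly. A further point is that $\xi$ must be globally defined on $\mathcal W$. This holds because $\gamma,\gamma_1$ are functions of $w$, and $\partial_u=\uu$, $\partial_v=\uu\times\nabla b$ are global fields. Hence the local Killing identities on each chart patch together to give $\mathcal L_\xi g=0$ on all of $\mathcal W$.
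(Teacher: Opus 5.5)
Your argument is correct and is essentially the paper's proof. The paper checks $\xi(|E_i|^2)=0$ and $[\xi,E_i]=0$ for the orthogonal frame $E_1=\uu$, $E_2=\uu\times\nabla w$, $E_3=\nabla w$, whereas you compute the coordinate components of $\mathcal L_\xi g$; both use only three inputs: $\xi(X)=0$ from \eqref{Xgam}, the ODE $\gamma_1'=(\beta\gamma-\alpha)\gamma_1$, and \eqref{compat}. These kill exactly your $(13)$ and $(23)$ entries (the $E_1$ and $E_2$ components of $[\xi,E_3]$ in the paper), with $(33)$ following as $2U\cdot(13)+2V\cdot(23)$.
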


\begin{proof}
In order to compute the Lie derivative $\mathcal{L}_\xi g$, we use the orthogonal frame $\{E_1,E_2,E_3\}$ introduced in Equation~\eqref{ortoframe}. The components of $\mathcal{L}_\xi g$ in this frame can be obtained using the definition:
$$\left(\mathcal{L}_\xi g\right)(E_i,E_j)=\xi(g(E_i,E_j)) - g([\xi,E_i],E_j) - g(E_i, [\xi, E_j])\,.$$
We first notice that
$$|E_1|^2 =A(w)\,,\qquad |E_2|^2 =A(w)^2 X(\gamma (w)u + v,w)\,,\qquad |E_3|^2 =A(w) X(\gamma (w)u + v,w)\,,$$
and they are all constant along the flow lines of $\xi$:
\begin{equation}\label{constNORMS}
\xi(|E_i|^2)=0\,, \quad i=1,2,3.
\end{equation}
Moreover, it is elementary to check that
\begin{equation*}\label{commXI}
\begin{split}
&[\xi, E_1]=[\xi, E_2]=0\,, \\
&[\xi, E_3] = -A X \left((\alpha -\beta \gamma)\gamma_1 +\gamma_1^\prime\right)E_1
- X\left(\gamma_1^\prime\gamma +\gamma_1(\gamma^\prime + \gamma \partial_v V - \partial_u V) \right)E_2\,.
\end{split}
\end{equation*}
Using the fact that the function $\gamma_1(w)$ satisfies, by definition, the ODE
\begin{equation}\label{gamma1eq}
(\alpha-\beta \gamma)\gamma_1+\gamma_1^\prime=0\,,
\end{equation}
and combining this with Equation~\eqref{compat}, we see that actually $\xi$ and $E_3$ commute, that is
$$
[\xi, E_3] =0\,.
$$
It clearly follows from the fact that $\xi(\abs{E_i}^2)=0$ and $[\xi, E_i] =0$ for any $i=1,2,3$, that $(\mathcal{L}_\xi g)(E_i, E_j)=0$ for all $i, j=1,2,3$, thus implying that $\xi$ is a Killing vector field, as we wanted to prove.
\end{proof}

\subsection{Step~5: Construction of a Killing symmetry in a degenerate case}\label{SS.step5}

In this section we consider the exceptional case $q\equiv 0$ in $\mathcal W$. Our goal is to prove a result analogous to Lemma~\ref{killingness}, i.e., the existence of a Killing symmetry of $\uu$. To this end, the following result will be crucial. Recall that $\beta\equiv \beta(w)$ was introduced in Equation~\eqref{dvx}.

\begin{lemma}\label{L.q0}
If $q=0$ in $\mathcal W$, then $\beta=0$ in $\mathcal W$.
\end{lemma}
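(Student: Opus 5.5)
The plan is to use the vanishing of $q=\partial_v X$ to collapse the zero-curvature system of Remark~\ref{Rem_zerocurv}. The result is an overdetermined ODE system in the single variable $u$, with coefficients depending only on $w$. We then show that this system is compatible with $p\not\equiv 0$ only when $\beta\equiv 0$. We work on the dense open set $\mathcal W_{(p)}$ (dense by the Gaussian-curvature argument given before Lemma~\ref{qoverp}) and extend by analyticity at the end.

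\emph{Step 1: simplify the structure functions.} Since $q\equiv 0$, we have $X=X(u,w)$ locally, and $s=t=0$. Identity~\eqref{pq_proportional} becomes $(F-2KX+\bar A'\Omega-A^2\beta^2)p=0$. On $\mathcal W_{(p)}$ this gives
\[
\Omega=\frac{A^2\beta^2+2KX-F(w)}{\bar A'},
\]
using Lemma~\ref{BFfuncdep} for $F=F(w)$. Consequently $\partial_u\Omega=2Kp/\bar A'$ and $\partial_v\Omega=0$. Substituting into \eqref{IICM3} and using $K=(\bar A')^2/(4A)$ gives $\partial_v z=0$. Next, \eqref{ICM3} with $q=0$ reads $\partial_u\big(X(z+A\beta)\big)=0$. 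Combined with $\partial_v z=0$, this yields
\[
z=\frac{c(w)}{X}-A\beta
\]
for a function $c(w)$, first on each chart and then globally on $\mathcal W$, because $z$ and $X$ are globally defined. The definition of $F$ then gives a first-order ODE in $u$ for $X$:
\[
\frac{p^2}{AX^2}+\Big(\frac{c}{X}-A\beta\Big)^2+KX=F(w). \qquad (\ast)
\]

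\emph{Step 2: extract an algebraic identity in $X$.} We next use the two remaining flatness equations, \eqref{IICM1} and \eqref{G1}. With $q=0$, \eqref{IICM1} becomes
\[
\frac{z}{X}\Omega+Dz+(A\beta)'-2A\alpha\beta+A'\beta=0.
\]
Here $\Omega$ and $z$ are the explicit functions of $(X,w)$ found in Step~1. The term $Dz=\partial_w z-Up-Vq=\partial_w z-(\alpha u+\beta v)p$ requires $\partial_w X$. That quantity is controlled by $\Omega=DX-2X\partial_v V$ together with \eqref{G2}, and the value of $r=\partial_u p$ is obtained by differentiating $(\ast)$ in $u$. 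The point to exploit is that every term other than $\partial_w X$ is a rational function of $X$ and $p$ with coefficients depending on $w$. Eliminating $\partial_w X$, $\partial_v V$ and $r$ among \eqref{G2}, \eqref{G1} and \eqref{IICM1}, and using $(\ast)$ to replace $p^2$, we expect to reach a relation of the form $P_w(X)+p\,Q_w(X)=0$ with $P_w,Q_w$ polynomial or rational in $X$. Since $p=\partial_u X\neq 0$ on $\mathcal W_{(p)}$, $X$ takes a continuum of values on each level set. This forces $p\,Q_w(X)=-P_w(X)$, and squaring and inserting $(\ast)$ makes it a polynomial identity in $X$. All of its coefficients must therefore vanish identically in $w$. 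The aim is to read off from the coefficient of the highest power of $X$, which should involve only $A^2\beta^2$, $c$ and $K$, that $\beta c=0$ and then $\beta=0$.

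\emph{Main obstacle and fallback.} The hard part is Step~2: there is no guarantee that the eliminated identity is nontrivial or that $\beta$ survives in a leading coefficient. If the algebra does not isolate $\beta$, the fallback is a global argument on one torus $\{w=w_0\}$, using $G=g(\uu,\cu\uu)=A^2\beta$ and the fact that $[\uu,\uu\times\nabla w]=0$. Since $z=X\partial_uV$, we have $\partial_u V=c/X^2-A\beta/X$ with $X$ depending on $u$ only. Integrating $\partial_uV$ along a closed $u$-cycle of the torus (after choosing coordinates adapted to the $\mathbb T^2$-action generated by $\uu$ and $\uu\times\nabla b$), the periodicity of the globally defined $z$ and $X$ should give $\oint (c/X^2-A\beta/X)\,du=0$. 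This constraint, compared with the analogous one obtained from \eqref{IICM1}, should force $\beta=0$. Whichever route succeeds, the final step is to extend the conclusion from $\mathcal W_{(p)}$ to all of $\mathcal W$ by analyticity of $\beta(w)$.
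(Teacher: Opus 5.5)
Your Step~1 is correct and matches the opening of the paper's proof ($\partial_v\Omega=0$, $\partial_u\Omega=\bar A'p/(2A)$, $\partial_v z=0$). Your formula $z=c(w)/X-A\beta$ even sharpens what the paper uses. The gap is Step~2, which is the only place where $\beta=0$ is supposed to emerge, and it is a forecast rather than an argument. You do not perform the elimination, you guess the form of the leading coefficient, and you concede that $\beta$ might not survive. The fallback does not fill the gap. It needs closed $u$-cycles, i.e.\ periodic orbits of $\uu$ on the torus, which need not exist, and ``comparing with \eqref{IICM1}'' is not a mechanism.

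There is also an error that would derail the elimination. The expression $\partial_w z-Up-Vq$ is the formula for $DX$, not $Dz$. The correct one is $Dz=\partial_w z-U\partial_u z-V\partial_v z$, i.e.\ $Dz=c'/X-(A\beta)'-c\,DX/X^2$, in which no explicit $u,v$ appear.

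The idea you are missing is how $\beta$ enters at all: only through $\partial_v U=\beta$, that is, through the commutator $[\partial_v,D]=-\beta\partial_u-(\partial_v V)\partial_v$. The paper applies $\partial_v$ to \eqref{IICM1} to get $\beta\,\partial_u z=0$, which in your notation is $\beta c=0$. In the remaining case $z=-A\beta$, it applies $\partial_v$ to \eqref{G1} to get $\beta\,\partial_u\Omega=0$, contradicting $\partial_u\Omega=\bar A'p/(2A)\neq0$.

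Your own setup closes the gap quickly once this is seen. Differentiating $(\ast)$ in $u$ and dividing by $p$ expresses $r$ through $X$. With $z+A\beta=c/X$, the identity \eqref{G2} then collapses to
\[
\bar A'\,DX=A^2\beta^2-F-2\bar A''X .
\]
The right-hand side is independent of $v$. On the other hand, $q\equiv0$ gives $DX=\partial_wX-Up$, hence $\partial_v(DX)=-\beta p$. So $\bar A'\beta p=0$, and $\beta=0$ on $\mathcal W_{(p)}$, hence on $\mathcal W$.

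Equivalently, substitute this $DX$ and your $\Omega$ and $z$ into \eqref{IICM1}. The $X^{-2}$ terms cancel and, using $2KA/\bar A'=\bar A'/2$, one is left with $C_1(w)/X+\tfrac12\beta\bar A'=0$. Since $X$ is nonconstant on each level set, this is exactly the coefficient comparison you envisaged. However, the decisive coefficient is $\tfrac12\beta\bar A'$ (not one yielding $\beta c=0$), and neither \eqref{G1} nor any squaring is needed. Either version is shorter than the paper's two-case argument.
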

\begin{proof}
By Equation~\eqref{pqsimpleq_reduced}, as $p\neq 0$ in an open and dense set, we must have that $K + \bar{A}^{\prime \prime} + \bar{A}^{\prime} \partial_v V=0$ so
\begin{equation}\label{partialvy0}
\partial_v V=-\frac{K + \bar{A}^{\prime \prime} }{\bar{A}^{\prime}}\,.
\end{equation}
(Recall that $\bar A^\prime$ is nonvanishing). In particular, since this relation implies that $\partial_v V$ is a function of $w$ only, we obtain
$$\partial_v z= \partial_v(X \partial_u V)=q\partial_u V + X \partial_{uv}V=0\,.$$
Now, from Equation~\eqref{pq_proportional} we also get
$$F -2KX + \bar{A}^{\prime}\Omega - A^2 \beta^2=0\,,$$
whose derivative $\partial_u, \partial_v$ yield
\begin{equation}\label{pOmeg}
\partial_u \Omega= \frac{\bar{A}^{\prime}}{2A}p, \qquad \partial_v \Omega= 0\,,
\end{equation}
where we have used that $\partial_u F=0$, the definition of $K$ and that, by assumption, $q=0$. We infer from these equations that
\begin{equation}\label{eq.pvp}
\partial_v p = \frac{2A}{\bar A^\prime}\partial_{vu}\Omega=0\,,
\end{equation}
by the commutation of the derivatives $\partial_u$ and $\partial_v$.

Next, recalling that we defined the linear differential operator $D(\cdot)=\nn(\cdot)$, and $\nn = \partial_w-U\partial_u-V\partial_v$, we easily deduce the formula
\begin{equation}\label{pvD}
[\partial_v, D]=-\beta \partial_u-\partial_v V \partial_v\,.
\end{equation}
Additionally, Equations~\eqref{IICM1} and~\eqref{eq.pvp} yield
\begin{equation}\label{eq.om}
\frac{z\Omega}{X} + Dz = \text{ function of } w\,,
\end{equation}
and by taking its derivative $\partial_v$, and using $\partial_v z=\partial_v \Omega=0$ and Equation~\eqref{pvD}, we deduce that
$$\beta \partial_u z=0\,,$$
so that $\beta=0$ or $\partial_u z=0$ in $\mathcal W$.

Assume that $\beta \neq 0$, so that $\partial_u z=0$ in $\mathcal W$. By Equation~\eqref{ICM3},
$$z=-A\beta\,,$$
and therefore
$$Dz=-(A\beta)^\prime\,,$$
which implies, using Equation~\eqref{eq.om}, that $z\Omega/X$ is a function of $w$ only. Since
$$z=-A\beta \neq 0\,,$$
we deduce that $\Omega/X$ is a function of $w$ only, so in particular,
$$\partial_u \Omega = \frac{\Omega}{X}p\,.$$
Combining this formula with Equation~\eqref{pOmeg}, we obtain
\begin{equation*}
\Omega = \frac{\bar{A}^{\prime}}{2A}X\,.
\end{equation*}
Finally, it follows from Equation~\eqref{G1}, from $q=0$ and $z=-A\beta$, that
$$
 -\frac{p^2}{AX^2} = -\left(2 A \frac{K + \bar{A}^{\prime \prime} }{\bar{A}^{\prime}} + A^{\prime}\right) \Omega
-2 A \, D \Omega\,,
$$
so its $\partial_v$ derivative gives us
$$\partial_v D\Omega=0\,,$$
where we have used that $\partial_v p=0$ and $\partial_v \Omega=0$. Using again Equation~\eqref{pvD} we infer that
$$\beta \partial_u \Omega=0\,,$$
and since $\partial_u \Omega=\frac{\bar{A}^{\prime}}{2A}p\neq 0$ by a previous computation, we necessarily have that $\beta=0$, which is a contradiction. This completes the proof of the lemma.
\end{proof}

To complete this section, it remains to observe that computations analogous to those in the proof of Lemma~\ref{killingness}, allow us to show that
$$\xi := \gamma_2(w) \partial_v$$
is a Killing vector field (which commutes with $\uu$) in $\mathcal W$, where the function $\gamma_2$ is
$$\gamma_2(w) := \exp \left(\int _1^w \!\! \frac{K(t)+\bar A^{\prime\prime}(t)}{\bar A^\prime(t)}dt\right)\,,$$
which solves the ODE:
$$
\gamma_2^\prime = \frac{K + \bar{A}^{\prime \prime} }{\bar{A}^{\prime}}\gamma_2.
$$
Indeed, following the notation in the proof of Lemma~\ref{killingness}, in this case we also have $\xi(|E_i|^2)=0$, $i=1,2,3$, and
\begin{equation*}\label{commXI0}
\begin{split}
&[\xi, E_1]=[\xi, E_2]=0\,, \qquad [\xi, E_3] = - A X \beta \gamma_2 \, E_1
+ X\left(\gamma_2^\prime + \partial_v V \, \gamma_2 \right)E_2\, ,
\end{split}
\end{equation*}
which together with Lemma~\ref{L.q0}, Equation~\eqref{partialvy0} and the definition of $\gamma_2$, leads us to the conclusion that
\[
[\xi,E_3]=0\,.
\]
The rest of the proof is the same as in Lemma~\ref{killingness}.

\subsection{Step~6: final remarks to complete the proof}\label{SS.step6}
Summarizing, in the previous steps we have proved that the vector field $\uu$ commutes with a vector field $\xi$, cf. Lemma~\ref{killingness} and Section~\ref{SS.step5}, in the open region $\mathcal W$, and that $\xi$ is Killing. Moreover, since $\mathcal W$ is fibred by (regular) compact level sets of $b$ (and hence of $w$), the vector field $\xi$ is tangent to $\partial\mathcal W$. The only Killing fields of the Euclidean metric with compact orbits are the rotations, so we conclude that there are coordinates $(x_1,x_2,x_3)$ such that $\xi$ takes the form
\[
\xi = -x_2\partial_{x_1}+x_1\partial_{x_2}\,,
\]
up to a constant factor, in $\mathcal W$, but it extends naturally to the whole $\mathbb R^3$ (in fact, it is a general property that a local Killing field admits a unique global extension). By analyticity of both $\uu$ and $\xi$, we then infer that
\[
[\uu,\xi]=0
\]
in $\Omega$, thus showing that $\uu$ is an axisymmetric vector field. Moreover, since $\xi(w)=0$ in $\mathcal W$, then $\xi(b)=0$ in $\mathcal W$, and the analyticity of $b$ and $\xi$ implies that, in fact,
\[
\xi(b)=0
\]
in $\Omega$. In particular, $\xi$ is tangent to $\partial\Omega$ (each component is a regular level set of $b$), so we finally conclude that $\Omega$ is a rotationally symmetric domain. Moreover, the boundary of $\Omega$ consists of finitely many rotationally symmetric toroidal surfaces. Then, the poloidal section of $\Omega$, i.e., the intersection $\Omega \cap \{x_1=0\}$, is disjoint from the $x_3$-axis. In the next section we shall prove that this section is a convex disk if $\partial\Omega$ is connected, or a convex annulus otherwise.

\section{Convexity of the poloidal cross section}\label{S:polo}

In this final section we complete the proof of Theorem~\ref{T.main1} showing that the poloidal section of the rotationally symmetric domain $\Omega$ is either a convex disk or a convex annulus. We caution the reader that the notations used in this section are independent of Section~\ref{sec:proof}.

As argued in Section~\ref{SS.step6}, we can introduce Euclidean coordinates $(x_1,x_2,x_3)\in\mathbb R^3$, and the associated cylindrical coordinates $(r, \varphi, z)$, so that the analytic vector field $\uu$ is an axisymmetric solution to the stationary Euler equations in $\Omega$. Moreover, $\Omega$ is a rotationally symmetric domain whose poloidal section $\Sigma$ is contained in the half-plane $\{x_1=0\}\cap\{x_2>0\}$. It is well known that $\uu$ then reads as
\begin{equation*}
\uu=\frac{1}{r}\left(\partial_z \psi \, \partial_r + \frac{S(\psi)}{r} \partial_\varphi -\partial_r \psi \, \partial_z\right)\,,
\end{equation*}
where $S(\psi)$ is the \emph{swirl} and the analytic function $\psi(r,z)$ is the \emph{stream function} of $\uu$. Moreover, setting $\mathsf{x}:=r^2$, the function $\psi(\mathsf{x},z)$ satisfies the Grad-Shafranov equation
\begin{equation}\label{GSeq}
4\mathsf{x} \frac{\partial^2 \psi}{\partial \mathsf{x}^2}+\frac{\partial^2 \psi}{\partial z^2}+SS^\prime -\mathsf{x} b^\prime=0
\end{equation}
in $\Sigma$, which we parametrize using the coordinates $\mathsf x>0$ and $z\in\mathbb R$ (as usual, since the involved functions do not depend on the angular variable $\varphi$, we identify $\Omega$ with its poloidal section $\Sigma$). Here $b(\psi)$ is the Bernoulli function (which is expressed as a function of $\psi$). We remark that the function $\psi$ is analytic in $\Sigma$. The localizability condition in terms of $\psi$ reads as $|\uu|^2 = \mathcal{U}(\psi)^2$, for some function $\mathcal U$, and in coordinates this is equivalent to
\begin{equation}\label{ISODeq}
4\mathsf{x} \left(\frac{\partial \psi}{\partial \mathsf{x}}\right)^2+\left(\frac{\partial \psi}{\partial z}\right)^2 + S^2 = \mathcal{U}(\psi)^2 \mathsf{x}\,.
\end{equation}
Notice that this excludes the case that $\psi$ is constant everywhere in $\Omega$, and hence, by analyticity, most of the values of $\psi$ are regular. In what follows we shall work in a (two-dimensional) neighborhood $\mathcal W\subset\Sigma$ of an arbitrary regular level set
$$\{\psi(\mathsf{x},z)=c_0\}\,;$$
if $\mathcal W$ is narrow enough, it is clear that the vector field $\uu$ does not vanish at any point and $\psi$ has a nonvanishing gradient. In particular, the function $\mathcal U>0$ in $\mathcal W$.

In the following proposition we show that there is an ODE whose solutions give a parametrization of a regular level curve of the stream function $\psi(\xd,z)$ within the poloidal section. In particular, since each connected component of $\partial\Omega$ is a regular level set of $\psi$, the following proposition describes the geometry of each component of $\partial\Omega$.  It was first derived by Palumbo in~\cite{Pal68}, but we include here a detailed proof for the sake of completeness. We remark that, by analyticity, for each compact level curve of $\psi$, the set of points such that the vector tangent to the curve is vertical is finite.

\begin{proposition}
Around any point whose tangent vector is not vertical, the curve
$\{\psi(\mathsf{x},z)=c_0\}$  can be parametrized as a graph $z(\xd)$ that satisfies the ODE:
\begin{equation}\label{dydxpal}
4\left(\frac{dz}{d\xd}\right)^2 = \frac{(\xd+\lambda_1)^2}{\nu_1^2(\xd-\mu_1) - \xd(\xd + \lambda_1)^2}\,,
\end{equation}
where $\lambda_1$, $\nu_1$ and $\mu_1$ are constants that depend on the level set.
\end{proposition}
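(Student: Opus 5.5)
The plan is to reduce the statement to one fact: \emph{along a regular level curve of $\psi$, the derivative $\partial_{\xd}\psi$ is an affine function of $\xd$.} Once this holds, the ODE~\eqref{dydxpal} follows from~\eqref{ISODeq} by algebra.

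Fix a regular level curve $\{\psi=c_0\}$ in $\mathcal W$ and write $P:=\partial_{\xd}\psi$ and $Q:=\partial_z\psi$. On this curve the quantities $S$, $SS'$, $\mathcal U$, $\mathcal U\mathcal U'$ and $b'$ are constants. Near a point with non-vertical tangent we have $Q\neq 0$, since the tangent is vertical exactly when $Q=0$. The implicit function theorem then gives a local graph $z(\xd)$ with $z'=-P/Q$. Equation~\eqref{ISODeq} restricted to the curve reads
\[
4\xd P^2+Q^2=H(\xd):=\mathcal U(c_0)^2\xd-S(c_0)^2 .
\]

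\textbf{Step 1 (second derivatives).} Differentiate~\eqref{ISODeq} with respect to $\xd$ and to $z$ as an identity in $\mathcal W$, where $\mathcal U$ and $S$ are functions of $\psi$. This gives two linear equations in $\psi_{\xd\xd},\psi_{\xd z},\psi_{zz}$:
\[
4P^2+8\xd P\psi_{\xd\xd}+2Q\psi_{\xd z}=\mathcal U^2+2\xd\,\mathcal U\mathcal U' P-2SS'P,
\]
\[
8\xd P\psi_{\xd z}+2Q\psi_{zz}=(2\xd\,\mathcal U\mathcal U'-2SS')Q .
\]
Together with the Grad--Shafranov equation~\eqref{GSeq}, $4\xd\psi_{\xd\xd}+\psi_{zz}=\xd b'-SS'$, this is a linear system. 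Its coefficient determinant should be a nonzero multiple of $4\xd P^2+Q^2=\mathcal U^2\xd>0$, up to a factor $Q$; I will verify this, and it uses $Q\neq0$. Solving the system expresses $\psi_{\xd\xd}$ and $\psi_{\xd z}$ in terms of $\xd,P,Q$ and the constants on the level set.

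\textbf{Step 2 (the key claim).} Along the graph,
\[
\frac{dP}{d\xd}=\psi_{\xd\xd}+\psi_{\xd z}\,z'=\frac{Q\psi_{\xd\xd}-P\psi_{\xd z}}{Q}.
\]
Substituting the expressions from Step 1 and eliminating $Q^2$ through $Q^2=H-4\xd P^2$, I expect all dependence on $\xd$ and $P$ to cancel, so that $dP/d\xd$ equals a constant $\kappa/2$ on the curve. This cancellation is where the specific coefficients of~\eqref{GSeq} and~\eqref{ISODeq} must combine exactly, and it is the main computational obstacle. It is a finite algebraic verification, and I would first carry it out in the combination $Q\psi_{\xd\xd}-P\psi_{\xd z}$, where the $z$-equation of Step 1 isolates $\psi_{\xd z}$ most cleanly. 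If the claim holds, integration gives $2P=\kappa(\xd+\lambda_1)$ for some constant $\lambda_1$.

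\textbf{Step 3 (conclusion).} Assume first $\kappa\neq0$. Then
\[
4\Big(\frac{dz}{d\xd}\Big)^2=\frac{4P^2}{Q^2}=\frac{\kappa^2(\xd+\lambda_1)^2}{H-\kappa^2\xd(\xd+\lambda_1)^2}.
\]
Dividing numerator and denominator by $\kappa^2$ and setting $\nu_1^2:=\mathcal U(c_0)^2/\kappa^2$ and $\mu_1:=S(c_0)^2/\mathcal U(c_0)^2$, this is exactly~\eqref{dydxpal}. The constants depend only on the level set, as stated. If $\kappa=0$, then $P$ is constant on the curve, which is a limiting case that the normalization of the constants in~\eqref{dydxpal} must absorb; I would treat it separately, or rule it out for the relevant arcs using compactness of the curve. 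Since the conclusion is local, no global argument is needed beyond choosing the graph chart away from the finitely many points with vertical tangent.
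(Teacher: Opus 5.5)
Your plan is correct and is essentially the paper's argument. The paper's identity $\partial_{\xd}\fp|_{\zeta}=1/(2\nu)$, with $\fp=\partial_{\xd}\psi/\mathcal U$, is exactly your Step 2 claim, and the cancellation you anticipate does occur using only the $z$-derivative of \eqref{ISODeq} together with \eqref{GSeq}: these give $Q\psi_{\xd\xd}-P\psi_{\xd z}=\tfrac14(b'-\mathcal U\mathcal U')Q$, so $\kappa=\tfrac12(b'-\mathcal U\mathcal U')=\mathcal U/\nu$, and the full $3\times 3$ system is unnecessary (its determinant is $16\xd(4\xd P^2+Q^2)$ with $4\xd P^2+Q^2=\mathcal U^2\xd-S^2$, not $\mathcal U^2\xd$).

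The case $\kappa=0$, which the paper tacitly excludes when it defines $\nu$, cannot occur on a closed regular level curve. The left side of the identity above is the derivative of $P$ along the tangent field $(Q,-P)$, so $\kappa=0$ would make $P$ constant along the whole curve, whereas $P=0$ at its highest point and $P\neq 0$ at its rightmost point.
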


\begin{proof}
It is convenient to re-parametrize the function $\psi$ by introducing another function $\zeta=\zeta(\psi)$ that satisfies
$$
\frac{d\zeta}{d\psi}=\frac{1}{\mathcal{U}(\psi)}\,,
$$
which will simplify many computations later. With some abuse of notation, this allows us to write $\psi$ as a function of $\zeta$, i.e., $\psi=\psi(\zeta)$. We also define two functions $\mu(\psi), \nu(\psi)$ as
$$
-\frac{b^\prime}{\mathcal{U}}+\mathcal{U}^\prime  =: -\frac{2}{\nu}\,, \qquad
\frac{S^2}{\mathcal{U}^2} =: \mu\,,
$$
which we can understand as functions of $\zeta$ after composition with $\psi(\zeta)$.
It is also useful to set the following functions, where we understand $\zeta=\zeta(\psi(\xd,z))$:
\begin{equation*}
\fp:=\frac{\partial \zeta}{\partial \xd}\,, \quad \fq:=\frac{\partial \zeta}{\partial z}\,, \quad
\mathfrak{r}:=\frac{\partial\fp}{\partial \xd}\,, \quad  \mathfrak{t} := \frac{\partial \fq}{\partial z}\,.
\end{equation*}

After a straightforward computation, we obtain that Equations~\eqref{GSeq} and~\eqref{ISODeq} read as
\begin{equation}\label{GSeq+}
4\mathsf{x} \fr + \ft =\frac{2}{\nu}\xd - \frac{1}{2}\frac{d\mu}{d\zeta}\,, \qquad
4\xd \fp^2 + \fq^2 = \xd -\mu\,,
\end{equation}
where $\mu$ is considered as a function $\mu(\psi(\zeta))\equiv \mu(\zeta)$. Observe that the level set $\{\psi(\mathsf{x},z)=c_0\}$ can also be represented as a level set of the function $\zeta$, i.e.,
$$\{\zeta(\mathsf{x},z)=c_1\}\,.$$
In the neighborhood $\mathcal W$ let us introduce the change of variables
$$(\xd, z) \mapsto (\xd, \zeta(\xd,z))\,,$$
which allows us to simplify Equation~\eqref{GSeq+}:
\begin{equation}\label{GSeq++}
\frac{\partial \fp}{\partial \xd}=\frac{1}{2\nu(\zeta)}\,, \qquad
4\xd \fp^2 + \fq^2 = \xd -\mu(\zeta)\,.
\end{equation}
Here the unknowns are the functions $\fp(\xd,\zeta)$ and $\fq(\xd,\zeta)$. Integrating the first equation we get
$$\fp(\xd,\zeta)=\frac{1}{2\nu(\zeta)}(\xd + \lambda(\zeta))\,,$$
for some function $\lambda(\zeta)$. It is convenient to write the second equation as
$$\Big(\frac{\fq}{\fp}\Big)^2 = \frac{\xd -\mu(\zeta)}{\fp^2}-4\xd\,,$$
which is well defined almost everywhere in $\mathcal W$ (by analyticity). Introducing the previous equation into the right hand side of this expression, we obtain
\begin{equation}\label{eq.qp}
\frac14\Big(\frac{\fq}{\fp}\Big)^2=\frac{\nu^2(\xd-\mu)-\xd(\xd+\lambda)^2}{(\xd+\lambda)^2}\,.
\end{equation}

Consider the level set $\{\zeta(\xd,z)=c_1\}$, which is an analytic closed curve in the plane $(\xd,z)$. By the implicit function theorem, we can write this curve as a graph $z(\xd)$ around any point of the curve whose tangent vector is not vertical (this excludes finitely many points by analyticity). Since
\[
\frac{dz}{d\xd}=-\frac{\frac{\partial \zeta}{\partial \xd}}{\frac{\partial \zeta}{\partial z}}=-\frac{\fp}{\fq}\,,
\]
we infer from Equation~\eqref{eq.qp} that \eqref{dydxpal} holds true, where $\lambda_1:=\lambda(c_1)$, $\nu_1:=\nu(c_1)$ and $\mu_1:=\mu(c_1)$ are constants that depend on the level set $\{\zeta(\xd,z)=c_1\}$.
\end{proof}

Next we prove that the regular level curves of $\psi$ are convex (independently of the values of the defining constants).

For the solutions of Equation~\eqref{dydxpal} to exist, the range of $\xd$ should be the interval where the denominator $\mathcal{Q}(\xd):=\nu_1^2(\xd-\mu_1) - \xd(\xd + \lambda_1)^2$ is positive. But $\mathcal{Q}(0)<0$, so a necessary condition is that $\mathcal{Q}$ has 3 distinct real roots $\xd_{1} < 0 < \xd_{2} < \xd_{3}$, so that $\xd_{1}\xd_{2}\xd_{3}=-\nu_1 ^2 \mu_1<0$. For this, the discriminant of $\mathcal Q$ should be positive, that is
$$
\Delta_{\mathcal{Q}} = \nu_1 ^2\left(4 \nu_1 ^4-\nu_1 ^2 \left(8 \lambda_1 ^2+36 \lambda_1  \mu_1 +27 \mu_1 ^2\right)+4 \lambda_1 ^3 (\lambda_1 +\mu_1 )\right)>0
$$
and, if this is the case, the two positive roots of $\mathcal Q$ are:
\begin{align*}
&\xd_{2}=
\frac{2}{3}\left(\sqrt{\lambda_1^{2}+3\nu_1^{2}}
\cos(\tfrac{1}{3}\arccos(\Theta)-\tfrac{2\pi}{3})
- \lambda_1\right)\,,\\
&\xd_{3}
=
\tfrac{2}{3}\left(\sqrt{\lambda_1^{2}+3\nu_1^{2}}
\cos(\tfrac{1}{3}\arccos(\Theta))-\lambda_1\right)\,,
\end{align*}
where $\Theta:= \frac{2 \lambda_1 ^3-9 \nu_1 ^2 (2 \lambda_1 +3 \mu_1 )}{2 \left(3 \nu_1 ^2+\lambda_1 ^2\right)^{3/2}}$. Since the level set $\{\zeta=c_1\}$ is a closed curve, the point $\xd=-\lambda_1$, which corresponds to a horizontal tangent vector, must be in the interval $(\xd_{2},\xd_{3})$; in particular $\lambda_1<-\xd_2<0$.

In order to study the convexity of the curve, we compute its curvature, which is given by the expression
$$\kappa(\xd)=\frac{\frac{d^2 z}{d\xd^2}}{(1+(\frac{dz}{d\xd})^2)^{3/2}}\,.$$
For the upper half-branch of the curve, corresponding to $\xd \in (\xd_2, -\lambda_1]$, the coordinate $z$ grows with $\xd$, so the parametrization is $\frac{dz}{d\xd} = \frac{\xd + \lambda_1}{2 \sqrt{\QQ(\xd)}}$. The curvature of this part is given by
$$
\kappa(\xd)=2\, \frac{2 \QQ(\xd)- (\xd + \lambda_1) \QQ'(\xd)}{\left(4 \QQ(\xd)+(\xd +\lambda_1)^2\right)^{3/2}}\,.
$$
Let us notice that $\kappa(\xd_2)=\frac{2 \QQ'(\xd_2)}{(\xd_2 +\lambda_1)^2}$ is positive since $\QQ$ must be increasing at $\xd_2$. In fact, the numerator of $\kappa$ is an increasing function because
$$
\frac{d}{d\xd}(2 \QQ(\xd)- (\xd + \lambda_1) \QQ'(\xd))=\nu_1 ^2+3 (\xd+\lambda_1)^2 >0\,,
$$
thereby the positive sign of $\kappa$ is conserved on the interval $(\xd_2, -\lambda_1]$.
For the other upper half-branch of the curve, corresponding to $\xd \in [-\lambda_1, \xd_3)$, the coordinate $z$ decreases as $\xd$ grows, so the parametrization satisfies $\frac{dz}{d\xd} = -\frac{\xd + \lambda_1}{2 \sqrt{\QQ(\xd)}}$. The analysis is similar and we again obtain positive curvature. Since the lower branch of the curve must be considered with the reversed orientation, we conclude that the curvature of the closed curve $\{\zeta(\xd,z)=c_1\}$ is positive, for any value of $c_1$ (as far as the level set is regular). This implies that all the regular level curves are convex, and in particular we obtain the convexity of the poloidal section of each component of $\partial\Omega$. It is then clear that the only two-dimensional domains that are foliated (except for a nowhere dense set) by convex closed curves are a convex disk or a convex annulus, which implies the convexity of the poloidal cross section of the domain $\Omega$, as we wanted to prove.

\begin{remark}[Twist]
The rotation number of the Poincar\'e map  (associated to the poloidal cross-section) of the axisymmetric flow $\uu$ is given by
\begin{equation}\label{rotNB}
\rho(c)=\frac{S(c)}{2\pi}\int_0^{\ell(c)}\frac{1}{r(s; c) \abs{\nabla \psi}(r^2(s; c), z(s; c))}ds
\end{equation}
where $\ell(c)$ is the length of the closed meridian curve $\{\psi(r,z)=c\}$ with arclength element $ds$.
The field $\uu$ has \textit{no twist} if $\rho$ is a constant independent of $c$.
Using the localizability condition, along $\Gamma_c$ we have
$|\nabla \psi|(r(s; c),z(s; c)) =\sqrt{\mathcal{U}(c) r^2(s; c)-S^2(c)}$,
so Equation~\eqref{rotNB} becomes
\begin{equation*}
\rho(c)=\frac{S(c)}{2\pi}\int_0^{\ell(c)}\frac{1}{r(s; c) \sqrt{\mathcal{U}(c) r^2(s; c)-S^2(c)}}ds\,.
\end{equation*}
For a generic choice of the functions $S$ and $\mathcal U$, this quantity is not constant. For the particular case of Gavrilov's solution~\cite{Gavrilov}, its twist property has been established in detail in~\cite{bal}.
\end{remark}

\section*{Acknowledgements}

\noindent This work is supported by the grants CEX2023-001347-S and PID2022-136795NB-I00 (D.P.-S.) funded by MCIN/AEI /10.13039/501100011033.

\appendix
\section{Some remarks on Palumbo-Balzano's proof}\label{S:Balz}

In this appendix we briefly comment on the proof by Palumbo and Balzano in~\cite{PB86}. They claim that any toroidal isodynamic equilibrium is axisymmetric. The first naive observation is that they do not assume that the solutions are analytic $(C^\omega)$, but this is an important property that is tacitly used in several parts of their article (and we also assume it in our proof). This said, let us explain the most important issues and gaps of their proof:
\begin{enumerate}
\item A crucial and strong assumption in Palumbo-Balzano's proof is that they assume the magnetic field is ergodic on almost all the level surfaces of $b$. This immediately implies that any first integral of $\uu$ is functionally dependent with $b$, which considerably simplifies all the arguments. Dropping this unjustified assumption makes the proof much more complicated, see Step~3 in Section~\ref{sec:proof}, where we deal with the general case.
\item Palumbo and Balzano do not construct a Killing symmetry of $\uu$, as we do in Steps~4 and~5 of our proof. Instead, they use the assumed ergodicity of $\uu$ and the theory of curves in $\mathbb R^3$ to show that the level sets of certain function are circles on each toroidal surface. They continue to argue that the centers of these circles belong to the same axis because they are orthogonal to the same pencil of planes. In this way they conclude that the level sets of $b$ are surfaces of revolution. This proof looks incomplete to us, and we have not been able to check its correctness. Moreover, this would only prove that the level sets of the function $b$ are rotationally symmetric, but it remains to prove that $\uu$ is axisymmetric itself (for which one needs, apparently, a different article by Palumbo~\cite{Pal86}). Our proof is more concise, filling all these issues and establishing symmetry of all the objects at the same time, using the same tool, i.e., a Killing vector field.
\item They construct a local coordinate system $(u,v,w)$ in a qualitative way (using geometric considerations), but the construction is not precise and no commutation relations are checked. In particular, the relation between the frames $\{\uu,\uu\times \nabla b\}$ and $\{\partial_u,\partial_v\}$ is not established, as well as several other important identities. See Step~1 in Section~\ref{sec:proof} for a rigorous proof.
\item There is no clear distinction between local and global objects, and no proof that all the relevant functions and vector fields that are used in their proof admit a coordinate free global expression. See also Step~1 in our proof.
\item It is fair to acknowledge that Palumbo and Balzano also exploited the fact that the metric is Euclidean, but instead of writing the zero curvature equations of the metric (as we do in Step~2 of Section~\ref{sec:proof}) they work with the intrinsic geometry of the level sets of coordinate surfaces (they use the Gauss and Codazzi-Mainardi equations of surface geometry). This makes the proof more complicated and conceptually less clear.
\item They do not consider the degenerate case that $q=0$ everywhere, so their proof tacitly assumes that this cannot happen. We cover this case, which is highly non trivial, in Step~5 of our proof.
\end{enumerate}

\bibliographystyle{amsplain}

\end{document}